\newcommand{\noi}{\noindent}
\newcommand{\halmos}{\rule{1ex}{1.4ex}}
\newcommand{\QED}{\nopagebreak{\hspace*{\fill}$\halmos$\medskip}}
\newtheoremstyle{mythm}% name
  {}%      Space above
  {}%      Space below
  {\itshape}%         Body font
  {}%         Indent amount (empty = no indent, \parindent = para indent)
  {\bfseries}% Thm head font
  {}%        Punctuation after thm head
  {.5em}%     Space after thm head: " " = normal interword space;
\theoremstyle{mythm}
\newtheorem{theorem}{Theorem}[section]
\newtheorem{proposition}[theorem]{Proposition}
\newtheorem{lemma}[theorem]{Lemma}
\newtheorem{exercise}[theorem]{Exercise}
\newtheorem{corollary}[theorem]{Corollary}
\newtheorem{conjecture}[theorem]{Conjecture}
\newtheorem{counterex}[theorem]{Counterexample}
\newtheorem{remark}[theorem]{Remark}
\newcommand{\bt}{\begin{theorem}}
\newcommand{\et}{\end{theorem}}
\newcommand{\bl}{\begin{lemma}}
\newcommand{\el}{\end{lemma}}
\newcommand{\bp}{\begin{proposition}}
\newcommand{\ep}{\end{proposition}}
\newcommand{\bcor}{\begin{corollary}}
\newcommand{\ecor}{\end{corollary}}
\newcommand{\br}{\begin{remark}\rm}
\newcommand{\er}{\end{remark}}
\newcommand{\bcon}{\begin{conjecture}}
\newcommand{\econ}{\end{conjecture}}
\newcommand{\bex}{\begin{exercise}}
\newcommand{\eex}{\end{exercise}}
\newcommand{\bcou}{\begin{counterex}}
\newcommand{\ecou}{\end{counterex}}
\newenvironment{Proof}[1][]{\noi\textbf{Proof #1}}{\QED}
\newcommand{\bpro}{\begin{Proof}}
\newcommand{\epro}{\end{Proof}}
\newcommand{\be}{\begin{equation}}
\newcommand{\ee}{\end{equation}}
\newcommand{\ba}{\begin{array}}
\newcommand{\ea}{\end{array}}
\newcommand{\bc}{\be\begin{array}{r@{\,}c@{\,}l}}
\newcommand{\ec}{\end{array}\ee}
\newcommand{\Ci}{{\cal C}}
\newcommand{\Ei}{{\cal E}}
\newcommand{\R}{{\mathbb R}}
\newcommand{\N}{{\mathbb N}}
\newcommand{\Z}{{\mathbb Z}}
\newcommand{\E}{{\mathbb E}}
\renewcommand{\P}{{\mathbb P}}
\newcommand{\Aston}[1]{\underset{{#1}\downarrow 0}{\Longrightarrow}}
\newcommand{\p}{\mathrm P}
\newcommand{\e}{\mathrm E}
\let\@fnsymbol\@arabic
\begin{document}

%numbering formulas within sections
\makeatletter\@addtoreset{equation}{section}
\makeatother\def\theequation{\thesection.\arabic{equation}}

%alternative layout for enumerate lists.
\renewcommand{\labelenumi}{{\rm (\roman{enumi})}}
\renewcommand{\theenumi}{\roman{enumi}}

\title{Scaling Limits of Disorder Relevant Non-Binary Spin Systems}
\author{ Liuyan Li \footnote{School of Statistics and Data Science,
            Guangdong University of Finance and Economics,
            Guangzhou, 510320, P. R. China.
            E-mail: mathlily@gdufe.edu.cn}
	\and Vlad Margarint \footnote{Office 350E, Fretwell Building,
	        University of North Carolina Charlotte, USA.
	        Email: vmargari@charlotte.edu}
	\and Rongfeng Sun \footnote{Department of Mathematics,
		National University of Singapore,
		10 Lower Kent Ridge Road, 119076 Singapore.
		Email: matsr@nus.edu.sg}
}

\date{\today}

\maketitle

\begin{abstract}\noi
In \cite{CSZ17a}, Caravenna, Sun and Zygouras gave general criteria for the partition functions of binary valued spin systems with a relevant random field perturbation to have non-trivial continuum and weak disorder limits. In this work, we show how these criteria can be extended to non-binary valued spin systems.
\end{abstract}
\vspace{.5cm}

\noi
{\it MSC 2010.} Primary: 82B44, Secondary: 82D60, 60K35.\\
{\it Keywords.} Continuum Limit, Disorder Relevance, Polynomial Chaos, Wiener Chaos. \\

%{\setlength{\parskip}{-2pt}\tableofcontents}
%\newpage

\section{Introduction}

We consider here equilibrium statistical mechanics models defined on a lattice, which interact
with a random environment (disorder) in the form of a random external field. If we consider
the random field as a perturbation of the underlying model without disorder, then such disorder
perturbation is called a {\em relevant perturbation} if the presence of disorder, regardless of
its strength, changes the large scale qualitative behaviour of the model (i.e., changes the
critical exponents of the model and leads to a different scaling limit than the model without
disorder). The disorder perturbation is called {\em irrelevant} if a small amount of disorder does
not change the large scale behaviour (i.e., the scaling limit is the same as the model without
disorder). Disorder relevance vs irrelevance often depends on the dimension of
the underlying model. At the critical dimension, whether disorder is relevant or irrelevant is
much more subtle and is often referred to as {\em marginal relevance} or {\em irrelevance}. We
refer the reader to \cite{Gi11} for more background on disordered systems and  the {\it Harris criterion}
\cite{H74} on when disorder perturbation is predicted by physicists to be relevant/irrelevant.

A classic example that fits into this framework is the Directed Polymer Model (DPM),
where a directed polymer is modelled by a random walk $X$ on $\Z^d$ interacting with an i.i.d.\ space-time random
environment (disorder) $\omega:=(\omega(n,x))_{n\in \N, x\in \Z^d}$. Given $\omega$, polymer length $N$,
and inverse temperature $\beta\geq 0$, the polymer measure is defined by weighting
each random walk path $(X_n)_{0\leq n\leq N}$ with a Gibbs weight $e^{\beta \sum_{n=1}^N \omega(n, X_n)}/Z^\omega_{N, \beta}$,
where the normalizing constant $Z^\omega_{N, \beta}$ is known as the partition function.
We can thus regard the DPM as a disorder perturbation of the random walk. The configuration of the
random walk $(X_n)_{0\leq n\leq N}$ can be identified with a binary-valued spin field $\sigma(n, x)\in \{0, 1\}$
with $\sigma(n, x)= 1_{\{X_n=x\}}$, and the disorder $\omega(n, x)$ can be interpreted as a random external
field acting on the spin $\sigma(n, x)$. It is known that in all dimensions $d\geq 1$ (see the book \cite{C17} and the
references therein), there is a critical point $\beta_c(d)\geq 0$ such that, for $\beta<\beta_c(d)$, the polymer has
diffusive fluctuations and converges to the same Brownian motion as the underlying random walk $X$. On the other hand
when $\beta>\beta_c(d)$, the polymer is expected to be super-diffusive, i.e., $\Vert X_N\Vert \gg N^{1/2}$ with high
probability under the polymer measure. In $d \geq 3$, it is known that $\beta_c(d)>0$, which means that
small disorder does not alter the large scale behaviour of the random walk and hence disorder perturbation is irrelevant.
On the other hand, it is known that $\beta_c(d)=0$ for $d=1, 2$, which means that disorder perturbation is relevant.
Dimension $d=2$ turns out to be the critical dimension between disorder relevance and irrelevance, and hence disorder
is marginally relevant in $d=2$.

From the renormalization group point of view, disorder relevance means that, the effective disorder strength of
the rescaled model diverges as we zoom out on larger and larger scales (or equivalently, take the continuum limit by
sending the lattice spacing to $0$), while disorder irrelevance means that the effective disorder strength vanishes
in the large scale limit. This suggests that, for disorder relevant models, it may be possible to tune the strength
of disorder down to $0$ at a suitable speed as we send the lattice spacing to $0$, such that we obtain in the limit a continuum
model with non-trivial dependence on disorder. Furthermore, for disordered systems defined via Gibbs measures, such
non-trivial continuum limits should already appear at the level of partition functions. This was the key insight
of \cite{CSZ17a}, which took inspiration from an earlier result of this type for the DPM in $d=1$ \cite{AKQ14a, AKQ14b}
and developed convergence criteria for partition functions of general (non-marginal) disorder relevant systems. These
convergence criteria were then applied in \cite{CSZ17a} to the disordered pinning model, the
long-range DPM in dimension $1+1$, and the random field perturbation of the critical 2-dimensional Ising model. Subsequently,
the continuum disordered pinning model and the continuum 2D random field Ising model (RFIM) were constructed in \cite{CSZ16}
and \cite{BS22} respectively.

Roughly speaking, the convergence criteria formulated in \cite{CSZ17a} require: 1) the correlation
functions of the underlying spin system (without disorder) have a non-trivial continuum limit (see {\bf (A1)} below), which
is expected to hold for a spin system at the critical point of a continuous phase transition; 2) the correlation functions
are square integrable (see {\bf (A1)} below) and have sufficiently fast decay in the order of the correlation functions (see {\bf (A2)}
below), which ensures (non-marginal) disorder relevance and the continuum limit of the partition functions admit a Wiener-It\^o chaos
expansion (see \eqref{eq:wtZ3} below). See \cite{CSZ17a} and the short review \cite{CSZ16b} for more details and further discussions.

One limitation of \cite{CSZ17a} is that, the convergence criteria were formulated for the partition functions of (non-marginal)
disorder relevant spin systems with {\em binary valued} spins, which played an essential role in the proof (see \eqref{eq:Mayer} below). Our purpose here
is to extend \cite{CSZ17a} and formulate convergence criteria for random field perturbations of {\em non-binary valued} spin systems,
such that their partition functions admit non-trivial continuum limits. Such convergence criteria could potentially be applied
to critical 2D Ising models with more general single spin measures and used to establish the universality of the continuum RFIM
constructed in \cite{BS22}. It could also be used to investigate non-trivial continuum limits of other disorder relevant systems,
such as the Blume-Capel model, where the spins can take values in the set $\{-1, 0, 1\}$ and the model has a more complex phase
diagram (see \cite{BP18, GKP24} and the references therein). However, verifying the convergence criteria {\bf (A1)-(A3)} below for concrete
models will be challenging, because it requires sharp control on the underlying spin system (without disorder) at its critical point.
Such control is available so far only for models which have a time direction (disordered pinning and DPM), or for the critical 2D Ising model
which is integrable to a large degree.

As in \cite{CSZ17a}, our convergence criteria do not apply to systems for which disorder is marginally relevant, such as the DPM
in the critical dimension $2$. Neither do we expect general convergence criteria to exist for marginally relevant models. However,
it is worth noting that there has been significant progress in understanding the scaling limit of the 2D DPM, which is closely connected to
the 2D stochastic heat equation (SHE), a critical singular stochastic partial differential equation (SPDE). In fact, there is a parallel between disordered systems and the theory of singular SPDEs \cite{H14, GIP15}, where the driving noise plays the role of disorder perturbation of
the underlying PDE, and the notions of sub-criticality, criticality, and super-criticality for singular SPDEs correspond respectively to the
notions of disorder relevance, marginal relevance/irrelevance, and disorder irrelevance for disordered systems. For more details on
recent progress on the 2D DPM and SHE, see the recent lecture notes \cite{CSZ24} and the references therein. It will be extremely interesting
to find a marginally relevant spin system (without a time direction) that have a non-trivial disordered continuum limit.

\subsection{Setup}
We will follow the same setup as in \cite{CSZ17a}. First we introduce the underlying spin system.
For $d \geq 1$, we consider a bounded, open, and simply connected domain $\Omega \subseteq \mathbb{R}^{d}$, and we define its
lattice approximation by $\Omega_{\delta}:=\Omega \cap (\delta \mathbb{Z})^{d}$ for $\delta>0$. A spin $\sigma_x\in \R$
is assigned to each $x\in \Omega_\delta$, and let $\p_{\Omega_{\delta}}^{\text {ref }}$ (with expectation
$\e_{\Omega_{\delta}}^{\text {ref }}$) be a probability measure on the spin configuration
$\sigma=\left(\sigma_{x}\right)_{x \in \Omega_{\delta}}$. Typically, $\p_{\Omega_{\delta}}^{\text {ref }}$
will be chosen to be the law of an equilibrium spin system at the critical point of a continuous phase transition, such
that the spin field $\sigma$ admits a non-trivial continuum limit as $\delta\downarrow 0$. Sometimes we will drop the sub
and superscripts in $\p_{\Omega_{\delta}}^{\text {ref }}$ to simplify the notation. For $x\in \Omega$, we will denote by
$x^\delta$ the point in $\Omega_{\delta}=\Omega \cap (\delta \mathbb{Z})^{d}$ that is closest to $x$ (fix any convention to break the tie if such $x^\delta$ is not unique).

Next, we introduce the random field (disorder), which is given by a family of i.i.d.\ random variables
$\omega:=\left(\omega_{x}\right)_{x \in \Omega_{\delta}}$ with $\E[\omega_x]=0$ and $\E[\omega_x^2]=1$. Probability and expectation for
$\omega$ will be denoted by $\P$ and $\E$. We assume that $\omega$ has finite log moment generating function
\begin{align}\label{2.1}
 \phi(\lambda) := \log \E[e^{\lambda \omega_x}] \qquad \mbox{ for all } |\lambda|<\lambda_0 \ \ \mbox{for some  } \lambda_0>0.
\end{align}
Note that under suitable scaling, the i.i.d.\ field $(\omega_x)_{x\in (\delta \Z)^d}$ converges in the continuum limit to a white
noise $W$ on $\R^d$, which is a Gaussian process $W=(W(f))_{f \in L^{2}(\mathbb{R}^{d})}$ with $\mathbb{E}[W(f)]=0$ and $\text{Cov}(W(f), W(g))=\int_{\mathbb{R}^{d}}f(x)g(x){\rm d} x$.

Given the random field (disorder) $\omega$ and disorder strength $\lambda>0$, we can then define the random field perturbation of
$\p_{\Omega_{\delta}}^{\text {ref }}$ through the following $\omega$-dependent Gibbs measure
\begin{equation}\label{eq:Gibbs}
\p_{\Omega_{\delta} ; \lambda}^{\omega}(\mathrm{d} \sigma):=\frac{e^{\sum_{x \in \Omega_{\delta}} \lambda \omega_{x} \sigma_{x}}}{Z_{\Omega_{\delta} ; \lambda}^{\omega}} \mathrm{P}_{\Omega_{\delta}}^{\mathrm{ref}}(\mathrm{d} \sigma),
\end{equation}
where the normalizing constant, called the partition function, is defined by
\begin{equation}\label{eq:Zom}
Z_{\Omega_{\delta} ; \lambda}^{\omega}:=\mathrm{E}_{\Omega_{\delta}}^{\mathrm{ref}}\left[e^{\sum_{x \in \Omega_{\delta}} \lambda \omega_{x}\sigma_{x}}\right] .
\end{equation}
The question we want to address is whether there is a suitable choice of $\lambda=\lambda_\delta \downarrow 0$, sometimes called {\em intermediate disorder scaling}, such that under suitable centering and scaling, $Z_{\Omega_{\delta} ; \lambda_\delta}^{\omega}$ admits non-trivial distributional limits. This is usually the first step in showing that the random Gibbs measure $\p_{\Omega_{\delta} ; \lambda}^{\omega}$ also has a non-trivial disordered continuum limit, where the disorder in the continuum limit is given by the white noise $W$. For binary-valued spin systems, this question was addressed in \cite{CSZ17a} where general convergence criteria were formulated. Our goal here is to consider the case where the spins $\sigma_x$ are not binary-valued, although we still assume boundedness to avoid additional technical complications.

\textbf{(A0)} There exists $K>0$ such that $\p_{\Omega_{\delta}}^{\mathrm{ref}}(\sigma_x\in [-K, K])=1$ for all $x\in \Omega_\delta$
and $\delta>0$.

To motivate our main result, we first recall how the convergence of $Z_{\Omega_{\delta} ; \lambda_\delta}^{\omega}$ in law was
established in \cite[Section 8]{CSZ17a} for spin systems with $\sigma_x\in \{\pm 1\}$. The starting point is the expansion
\begin{align}
Z_{\Omega_{\delta} ; \lambda_\delta}^{\omega} &=\mathrm{E}_{\Omega_{\delta}}^{\mathrm{ref}}\Big[e^{\sum_{x \in \Omega_{\delta}}\lambda_\delta \omega_{x}\sigma_{x}}\Big] =\mathrm{E}_{\Omega_{\delta}}^{\mathrm{ref}}\Big[\prod_{x \in \Omega_\delta}e^{\lambda_\delta \omega_{x}\sigma_{x}}\Big] \notag \\
& = \mathrm{E}_{\Omega_{\delta}}^{\mathrm{ref}}\Big[\prod_{x \in \Omega_\delta} (\cosh \lambda_\delta \omega_x + \sigma_x \sinh \lambda_\delta\omega_x )\Big] \label{eq:Mayer} \\
&= \exp\Big\{\sum_{x \in \Omega_\delta} \log \cosh \lambda_\delta \omega_x\Big\} \mathrm{E}_{\Omega_{\delta}}^{\mathrm{ref}}\Big[\prod_{x \in \Omega_\delta} (1+ \sigma_x \tanh \lambda_\delta\omega_x )\Big], \notag
\end{align}
where we linearised $e^{\lambda_\delta \omega_{x}\sigma_{x}}$ in \eqref{eq:Mayer} since $\sigma_x$ is binary valued. Note that
$\sum_{x \in \Omega_\delta} \log \cosh \lambda_\delta \omega_x$ is a sum of i.i.d.\ random variables with
\begin{align*}
\E\Big[\sum_{x \in \Omega_\delta} \log \cosh \lambda_\delta \omega_x\Big]  = |\Omega_\delta| \Big(\frac{\lambda_\delta^2}{2} +O(\lambda_\delta^4)\Big) \quad \mbox{and} \quad
\mathbb{V}{\rm ar}\Big(\sum_{x \in \Omega_\delta} \log \cosh \lambda_\delta \omega_x\Big)  \leq C |\Omega_\delta| \lambda_\delta^4.
\end{align*}
As long as $|\Omega_\delta| \lambda_\delta^4 = O(\delta^{-d} \lambda_\delta^4) \to 0$ as $\delta\downarrow 0$, i.e., $\lambda_\delta = o(\delta^{d/4})$, then
\begin{equation}\label{eq:LLN}
e^{-\frac{\lambda_\delta^2}{2} |\Omega_\delta|} \exp\Big\{\sum_{x \in \Omega_\delta} \log \cosh \lambda_\delta \omega_x\Big\} \to 1 \quad \mbox{ in probability},
\end{equation}
and hence we can normalize $Z_{\Omega_{\delta} ; \lambda_\delta}^{\omega}$ and then focus on the distributional limit of
\begin{equation}\label{eq:wtZ}
\widetilde Z_{\Omega_{\delta} ; \lambda_\delta}^{\omega} := \mathrm{E}_{\Omega_{\delta}}^{\mathrm{ref}}\Big[\prod_{x \in \Omega_\delta} (1+ \sigma_x \tanh \lambda_\delta\omega_x )\Big] = 1 + \sum_{k=1}^\infty \sum_{I\subset \Omega_\delta, |I|=k} \mathrm{E}_{\Omega_{\delta}}^{\mathrm{ref}}[\sigma_I] \prod_{x\in I} \xi_\delta(x),
\end{equation}
where $\sigma_I:= \prod_{x\in I} \sigma_x$, and $\xi_\delta(x):= \tanh \lambda_\delta\omega_x$ is a family of i.i.d.\ random variables with
\begin{equation}
\E[\xi_\delta(x)] = O(\lambda_\delta^3) \quad \mbox{and} \quad \mathbb{V}{\rm ar}(\xi_\delta(x))= \lambda_\delta^2 +O(\lambda_\delta^4) \qquad
\mbox{by Taylor expansion}.
\end{equation}
The expansion in \eqref{eq:wtZ} is called a polynomial chaos expansion in the family of random variables $\xi_\delta$. Pretending that the mean of $\xi_\delta(x)$ is zero (because it is negiligible as $\lambda_\delta\to 0$), it is an $L^2$-orthogonal expansion and can be regarded as the discrete analogue of the Wiener-It\^{o} chaos expansion w.r.t.\ a white noise $W$. In particular, if we match mean and variance (up to leading order)
and make the approximation
$$
\xi_\delta(x) \approx \lambda_\delta \delta^{-d/2} \int_{\Lambda_\delta(x)} W({\rm d}y),
$$
where $\Lambda_\delta(x)$ denotes the cube of side length $\delta$ centered at $x\in \Omega_\delta$. Then we have
\begin{equation}\label{eq:wtZ2}
\widetilde Z_{\Omega_{\delta} ; \lambda_\delta}^{\omega} \approx  1 + \sum_{k=1}^\infty \frac{1}{k!} \idotsint_{\Omega^k}
\lambda_\delta^k \delta^{-dk/2} \mathrm{E}_{\Omega_{\delta}}^{\mathrm{ref}}[\sigma_{x^\delta_1}\cdots \sigma_{x^\delta_k}]\, W({\rm d}x_1)\cdots W({\rm d}x_k),
\end{equation}
where $x^\delta_i$ the point in $\Omega_{\delta}$ closest to $x_i\in \Omega$, and the factor $1/k!$ arises because $\Omega^k$ consists
of $k!$ sectors that give identical contributions by exchangeability. Suppose the spin field $\sigma$ with law $\mathrm{P}_{\Omega_{\delta}}^{\mathrm{ref}}$ satisfies the assumption

\textbf{(A1)} There exists $\gamma>0$ such that for any $k\in\N$, the rescaled $k$-spin correlation function
\begin{equation}\label{eq:psidelta}
\psi_{\delta}(x_1, \ldots, x_k) := \bold{1}_{\{x_i^{\delta} \neq x_j^{\delta} \, \forall\, i \neq j \}}\, \delta^{-k\gamma}\, \mathrm{E}_{\Omega_{\delta}}^{\mathrm{ref}}[\sigma_{x_1^{\delta}} \ldots \sigma_{x_k^{\delta}}], \quad (x_1, \ldots, x_k)\in \Omega^k,
\end{equation}
converges in $L^2(\Omega^k)$ to some limit
$\psi_0 :\Omega^k \to \R$. More precisely,
\begin{equation}\label{A1}
\lim _{\delta \downarrow 0}\left\|{\psi}_{\delta}-\psi_{0}\right\|_{L^{2}(\Omega^k)}^{2}=0.
\end{equation}
We will regard $\psi_0$ as a function defined on $\cup_{k\in\N}\Omega^k$ so that it can take arbitrary number of arguments.
Note that assumption {\bf (A1)} ensures that $\p_{\Omega_{\delta}}^{\text {ref }}$ is the law of a spin system at the {\em critical point
of a continuous phase transition}, which admits a non-trivial continuum limit as $\delta\downarrow 0$. Furthermore, the finite $L^2$ norm
assumption on $\psi_0$ essentially guarantees that the system is disorder relevant (see \cite[Section 1.3]{CSZ17a}) and the iterated stochastic
integrals in \eqref{eq:wtZ3} below are well-defined.

If {\bf (A1)} is satisfied, then by \cite[Theorem 2.3]{CSZ17a}, we can choose $\lambda_\delta:= \hat\lambda \delta^{\frac{d}{2}-\gamma}$ in \eqref{eq:wtZ2} to obtain that, as $\delta\downarrow 0$,
\begin{equation}\label{eq:wtZ3}
\widetilde Z_{\Omega_{\delta} ; \lambda_\delta}^{\omega} \stackrel{\rm dist}{\longrightarrow} 1 + \sum_{k=1}^\infty \frac{\hat \lambda^k}{k!} \idotsint_{\Omega^k}
 \psi_0(x_1, \ldots, x_k)\, {\rm d} W(x_1)\cdots {\rm d}W(x_k),
\end{equation}
provided this series is convergent in $L^2$, and there is uniformity (in $\delta$) in the convergence of the series $\sum_k$ in \eqref{eq:wtZ2}
so that the series can be truncated at a large but fixed value of $k=M$ as $\delta\downarrow 0$. This leads to the second assumption

\textbf{(A2)} For any $\hat\lambda>0$,
\begin{equation}\label{eq:A2}
\lim_{M\to\infty} \limsup_{\delta\downarrow 0} \sum_{k=M+1}^\infty \frac{\hat\lambda^{2k}}{k!} \Vert \psi_\delta\Vert_{L^{2}(\Omega^k)}^{2} = 0.
\end{equation}

\begin{remark}{\rm
With $\lambda_\delta:= \hat\lambda \delta^{\frac{d}{2}-\gamma}$, the condition $\lambda_\delta = o(\delta^{d/4})$ that ensures \eqref{eq:LLN}
becomes $\gamma <d/4$. When $\p_{\Omega_{\delta}}^{\text {ref }}$ is the law of the critical $2$-dim Ising model, assumptions {\bf (A1)}
and {\bf (A2)} were verified in \cite{CSZ17a} with $\gamma =1/8<d/4=1/2$. }
\end{remark}

\subsection{Main result}
When the spins $\sigma_x$ are not binary valued, the linearization step \eqref{eq:Mayer} can no longer be applied. Furthermore, it is not clear
whether it is even possible to normalise the partition function as in \eqref{eq:LLN} that could lead to a polynomial chaos expansion as in
\eqref{eq:wtZ}. Therefore we will consider instead the modified partition function
\begin{equation}\label{eq:ZWick}
\widehat Z_{\Omega_{\delta} ; \lambda}^{\omega}:=\e_{\Omega_{\delta}}^{\mathrm{ref}}\left[e^{\sum_{x \in \Omega_{\delta}} \big(\lambda \omega_{x}\sigma_{x}-\phi(\lambda \sigma_x)\big)}\right],
\end{equation}
which automatically satisfies $\E[\widehat Z_{\Omega_{\delta} ; \lambda}^{\omega}]=1$. When $\omega_x$ are i.i.d.\ standard Gaussian,
such a normalization corresponds to replacing $e^{\lambda \omega_{x}\sigma_{x}}$ by the Wick exponential
$:e^{\lambda \omega_{x}\sigma_{x}}:\, = e^{\lambda \omega_{x}\sigma_{x}- \lambda^2\sigma_x^2/2}$.
Although this changes the underlying Gibbs measure $\p^\omega_{\Omega_\delta, \lambda}$ defined in \eqref{eq:Gibbs} because $\sum_x \phi(\lambda \sigma_x)$ depends on the spin configuration $\sigma$, there is intrinsic interest in such a Gibbs measure as seen in the case of the
two-dimensional Parabolic Anderson Model (PAM) \cite{QRV22}. We will discuss this in more detail in Remark \ref{R:PAM}.

To show that $\widehat Z_{\Omega_{\delta} ; \lambda}^{\omega}$ defined in \eqref{eq:ZWick} has a non-trivial continuum limit, our basic strategy
is to perform Taylor expansion. In contrast to \eqref{eq:wtZ}, we will no longer have an $L^2$-orthogonal expansion due to the presence of higher
powers of $\lambda \omega_x\sigma_x$. But the leading order terms are expected to be comparable to the expansion in \eqref{eq:wtZ}. Therefore most of the work goes into controlling the higher order terms in the expansion and show they are negligible as $\delta\downarrow 0$. This requires
one more assumption to control the $k$-spin correlations when some of the $k$ spins coincide:

\textbf{(A3)} For $k\geq 1$, let $x^\delta_1, \ldots, x^\delta_k$ be distinct points in $\Omega_\delta$. Let $r_1, \ldots, r_k\in\N$ and denote $(r_i)_2 := r_i \mbox{ (mod 2)} \in \{0, 1\}$. Then there exists a universal constant $C\geq 1$ such that
\begin{align}\label{A3}
\Big| \e_{\Omega_{\delta}}^{\mathrm{ref}}\Big[\prod_{i=1}^k\sigma_{x^\delta_i}^{r_i}\Big]\Big| \leq C^{\sum_{i=1}^k (r_i-(r_i)_2)} \Big|\e_{\Omega_{\delta}}^{\mathrm{ref}}\Big[\prod_{i=1}^k \sigma_{x^\delta_i}^{(r_i)_2}\Big] \Big|.
\end{align}
In light of the scaling property in assumption {\bf (A1)}, we believe assumption \textbf{(A3)} can be weakened further by allowing a diverging
constant $C=C(\delta)$, possibly as a negative power of $\delta$. But we will not pursue it here.

\begin{remark}{\rm
For the Ising model with $\sigma_x\in \{\pm 1\}$, \eqref{A3} holds trivially with $C=1$. In general, when spins fuse, we expect there to be a
pairing effect that allows one to replace $\sigma_{z}^{r}$ by $\sigma_{z}^{(r)_2}$ for $r\in \N$. For a centred Gaussian field $(\sigma_{x^\delta})_{x^\delta\in \Omega_\delta}$ with a covariance $\mathbb{C}{\rm ov}(\sigma_{x^\delta}, \sigma_{y^\delta})$ that decays polynomially in $|x^\delta -y^\delta|/\delta$, this can be verified using Wick's theorem. A similar result has been established for the critical site percolation on the planar triangular lattice, see \cite[Theorem 1.1]{CF24}, even though Wick's theorem does not apply in this setting.}
\end{remark}

We are now ready to state our main result.
\begin{theorem}\label{T:main}
Assume that the reference spin measure $\p^{\mathrm{ref}}_{\Omega_\delta}$ satisfies assumptions \textbf{(A0)}-\textbf{(A3)} for some $K, \gamma>0$ and $\psi_0$, with $\gamma<d/4$. Then with $\lambda=\lambda_\delta:=\hat \lambda \delta^{\frac{d}{2}-\gamma}$, the normalized partition function $\widehat Z_{\Omega_{\delta} ; \lambda_\delta}^{\omega}$ defined in \eqref{eq:ZWick} converges in distribution (as $\delta\downarrow 0$) to a non-trivial limit
\begin{equation}\label{eq:MCalZ}
{\mathcal Z}^W_{\Omega, \hat\lambda} = 1 + \sum_{k=1}^\infty \frac{\hat\lambda^k}{k!} \idotsint_{\Omega^k} \psi_0(x_1, \ldots, x_k) \, {\rm d}W(x_1) \ldots {\rm d}W(x_k),
\end{equation}
where $W$ is a white noise on $\R^d$ and the series converges in $L^2$.
%and $\E[(\widehat Z_{\Omega_{\delta} ; \lambda_\delta}^{\omega})^2] \to \E[({\mathcal Z}^W_{\Omega, \hat\lambda})^2]$.
\end{theorem}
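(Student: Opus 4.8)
The plan is to Taylor-expand each single-site Gibbs weight in \eqref{eq:ZWick}, split $\widehat Z^{\omega}_{\Omega_\delta;\lambda_\delta}$ into a leading multilinear polynomial chaos plus a remainder, identify the former with the object treated in \cite[Theorem 2.3]{CSZ17a}, and show the latter is negligible in $L^2(\P)$. First I would expand, for each $x\in\Omega_\delta$, the single-site weight as a power series in $\sigma_x$: writing $g_x(s):=\lambda\omega_x s-\phi(\lambda s)$, which is entire in $s$ with $g_x(0)=0$, $g_x'(0)=\lambda\omega_x$ (since $\phi'(0)=\E[\omega_x]=0$) and $g_x''(0)=-\lambda^2$ (since $\phi''(0)=1$), one obtains
\[
e^{\lambda\omega_x\sigma_x-\phi(\lambda\sigma_x)}=\sum_{r\ge 0}\lambda^r b_r(\omega_x)\,\sigma_x^r,
\]
where each $b_r$ is a fixed polynomial in $\omega_x$ with $b_0\equiv 1$ and $b_1(\omega_x)=\omega_x$; indeed every monomial of $g_x$ carries equal powers of $\lambda$ and $s$, so the coefficient of $\sigma_x^r$ is exactly $\lambda^r$ times an $\omega_x$-polynomial. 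The decisive feature of the Wick-type normalization is that $\E[e^{\lambda\omega_x s-\phi(\lambda s)}]=1$ for every $s$, whence $\E[b_r(\omega_x)]=0$ for all $r\ge 1$; this centering is what makes the law-of-large-numbers step \eqref{eq:LLN} unnecessary here. Since $\Omega_\delta$ is finite and $|\sigma_x|\le K$ by \textbf{(A0)} with $\lambda K<\lambda_0$ for small $\delta$, expanding the product over $x$ gives the absolutely convergent multi-index series
\[
\widehat Z^{\omega}_{\Omega_\delta;\lambda_\delta}=\sum_{\mathbf r=(r_x)_{x\in\Omega_\delta}}\Big(\prod_{x}\lambda^{r_x}b_{r_x}(\omega_x)\Big)\,\e^{\mathrm{ref}}_{\Omega_\delta}\Big[\prod_x\sigma_x^{r_x}\Big].
\]

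Next I would isolate the contribution of the configurations $\mathbf r$ with all $r_x\in\{0,1\}$, namely
\[
\widehat Z^{\mathrm{lead}}:=\sum_{I\subset\Omega_\delta}\lambda^{|I|}\Big(\prod_{x\in I}\omega_x\Big)\,\e^{\mathrm{ref}}_{\Omega_\delta}[\sigma_I].
\]
This is exactly the polynomial chaos of \eqref{eq:wtZ}--\eqref{eq:wtZ2} driven by the i.i.d.\ variables $\xi_\delta(x)=\lambda_\delta\omega_x$, which now have mean exactly $0$ and variance $\lambda_\delta^2=\hat\lambda^2\delta^{d-2\gamma}$. With $\lambda_\delta=\hat\lambda\delta^{d/2-\gamma}$, assumption \textbf{(A1)} supplies the $L^2$-convergence $\psi_\delta\to\psi_0$ of the rescaled kernels and \textbf{(A2)} supplies the uniform tail bound needed to truncate the chaos at a fixed order $M$, so $\widehat Z^{\mathrm{lead}}$ converges in distribution to $\mathcal Z^W_{\Omega,\hat\lambda}$ by \cite[Theorem 2.3]{CSZ17a}; the limiting series converges in $L^2$ by the Wiener--It\^o isometry, $\E[(\mathcal Z^W_{\Omega,\hat\lambda})^2]=1+\sum_k\frac{\hat\lambda^{2k}}{k!}\|\psi_0\|_{L^2(\Omega^k)}^2<\infty$.

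The main work, and what I expect to be the principal obstacle, is to show that the remainder $\widehat Z^{\omega}_{\Omega_\delta;\lambda_\delta}-\widehat Z^{\mathrm{lead}}$, collecting all $\mathbf r$ with at least one $r_x\ge 2$, tends to $0$ in $L^2(\P)$; Slutsky's theorem then yields the claim. Since the factors $b_{r_x}(\omega_x)$ are independent across $x$ and centered for $r_x\ge 1$, the second moment retains only pairs of configurations with matching supports, and the site-wise cross-correlations factorize and are bounded via Cauchy--Schwarz together with the constant moment bounds $\E[b_r(\omega)^2]<\infty$ (the mixed exponents reducing to the diagonal $r_x=r_x'$ case). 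For each surviving term, assumption \textbf{(A3)} replaces a fused correlation $\e^{\mathrm{ref}}[\prod_i\sigma_{x_i}^{r_i}]$ by the reduced correlation $\e^{\mathrm{ref}}[\prod_i\sigma_{x_i}^{(r_i)_2}]$ at the cost $C^{\sum_i(r_i-(r_i)_2)}$, while each site carrying exponent $r_x\ge 2$ contributes the extra factor $\lambda_\delta^{2(r_x-(r_x)_2)}\le\lambda_\delta^4$ relative to its reduced $\{0,1\}$-valued counterpart. Summing the position of one such site over $\Omega_\delta$ produces the factor $|\Omega_\delta|\lambda_\delta^{4}=O(\delta^{-d})\,\hat\lambda^4\delta^{2(d-2\gamma)}=O(\delta^{\,d-4\gamma})$, which vanishes precisely because $\gamma<d/4$ --- the very condition behind \eqref{eq:LLN} --- while the reduced correlations reassemble, through the Riemann-sum identity $\sum_{|I|=k}(\e^{\mathrm{ref}}[\sigma_I])^2\approx\frac1{k!}\delta^{(2\gamma-d)k}\|\psi_\delta\|_{L^2(\Omega^k)}^2$, into the finite series controlled by \textbf{(A1)}--\textbf{(A2)}.

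The delicate point I expect to demand the most care is making this estimate uniform over \emph{all} remainder configurations simultaneously: one must sum over the number $m\ge 1$ and the positions of the fused sites and over their exponents $r_x\ge 2$, balancing the combinatorial entropy and the growth of both $\E[b_r(\omega)^2]$ and the constants $C^{\sum_i(r_i-(r_i)_2)}$ from \textbf{(A3)} against the compensating decay $\lambda_\delta^{2(r_x-(r_x)_2)}$. Since each fused site contributes at most $O(\delta^{\,d-4\gamma})$ uniformly in its exponent (larger exponents only bring more $\lambda_\delta$-decay), the remainder is bounded by a convergent series in $m$ of the form $\sum_{m\ge 1}(\mathrm{const}\cdot\delta^{\,d-4\gamma})^m\to 0$. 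Establishing the absolute convergence and the uniformity of this double expansion, rather than any single estimate within it, is the technical heart of the argument.
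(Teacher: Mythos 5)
Your overall strategy is essentially the paper's: Taylor-expand the Wick-normalized weight, identify the multilinear part with the polynomial chaos of \eqref{eq:wtZ} and pass to the limit via \textbf{(A1)}--\textbf{(A2)} and \cite[Theorem 2.3]{CSZ17a}, then kill the rest in $L^2(\P)$ using the centering of the coefficients, assumption \textbf{(A3)}, and power counting under $\gamma<d/4$. However, your treatment of the remainder's second moment has a genuine gap, precisely where your bookkeeping departs from the paper's.

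The gap is in the cross terms between two multi-indices $\mathbf r\neq\mathbf r'$ with matching supports but \emph{different} exponents at a given site, e.g.\ $(r_x,r'_x)=(2,1)$. These do not vanish: $\E[b_2(\omega_x)b_1(\omega_x)]=\kappa_3/2\neq 0$ for general disorder. Your claim that "the mixed exponents reduce to the diagonal $r_x=r'_x$ case" is true only for the disorder moments $\E[b_{r_x}b_{r'_x}]$ (via Cauchy--Schwarz); it is false for the spin part, because $r_x$ and $r'_x$ sit inside two \emph{different} correlation functions, whose \textbf{(A3)}-reductions have different parities and cannot be symmetrized. Consequently your uniform per-site estimate "each fused site contributes at most $O(\delta^{d-4\gamma})$, uniformly in its exponent" fails for such sites: a $(2,1)$ site carries only $\lambda_\delta^3$, is parity-reduced away in one replica but survives in the other, and after Cauchy--Schwarz over the position sums it contributes
\begin{equation*}
\lambda_\delta^{3}\,\delta^{-d/2}\,\delta^{\gamma-d/2}\;=\;\hat\lambda^3\,\delta^{d/2-2\gamma},
\end{equation*}
not $O(\delta^{d-4\gamma})$. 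Since $d/2-2\gamma=(d-4\gamma)/2$, these mixed sites decay \emph{slower} than the diagonal $(2,2)$ sites and are in fact the binding terms of your decomposition. The theorem still holds ($d/2-2\gamma>0$ is again equivalent to $\gamma<d/4$), but your stated estimates do not prove it: one must redo the power counting per site as $p_x(\frac d2-\gamma)-d+\gamma\big((r_x)_2+(r'_x)_2\big)$ with $p_x=r_x+r'_x$, and check positivity over all admissible exponent pairs, with equality only at $(1,1)$. Note that the paper sidesteps this issue by a different grouping: it packages all powers $\geq 2$ at a site into the single object $\eta_x=e^{\lambda\omega_x\sigma_x-\phi(\lambda\sigma_x)}-1-\lambda\omega_x\sigma_x$, applies the finite Cauchy--Schwarz bound $(\sum_{i=1}^n a_i)^2\leq n\sum_i a_i^2$ over the $2^k-1$ splittings $(I,J)$ of each site set, and is then left with only the diagonal quantities $\E[\eta_u(\sigma,\omega)\eta_u(\sigma',\omega)]$, in which \emph{both} replica exponents are $\geq 2$ (see \eqref{4.21}: $m\geq 4$, $2\leq l\leq m-2$), so every fused site there genuinely contributes $\lambda_\delta^4\delta^{-d}=O(\delta^{d-4\gamma})$. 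Your fully expanded multi-index version cannot invoke that finite Cauchy--Schwarz step (the set of exponent configurations at each site is unbounded), so the mixed-parity cross terms must be faced head-on, and your sketch as written does not do so correctly.
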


\begin{remark}[\textbf{PAM}] \label{R:PAM}{\rm
The Wick ordering of the exponential in \eqref{eq:ZWick} has appeared in the study of SPDEs such as
the Parabolic Anderson Model (PAM)
\begin{equation}\label{eq:PAM1}
\frac{\partial}{\partial t}u(t, x) = \frac{1}{2}\Delta u(t, x) +  (u\cdot W)(t,x), \qquad t\geq 0, x\in \R^d,
\end{equation}
where $W$ is a white noise on $\R^d$. The equation is singular in $d\geq 2$ due to the product $u\cdot W$.
Before the theory of regularity structures \cite{H14} (see also later works \cite{HL15, HL18} on the
PAM), this difficulty is often bypassed by replacing the singular product $u\cdot W$ by the
Wick product $u \diamond W$ (see \cite{NZ89, Hu02, HY09, QRV22}),
\begin{equation}\label{eq:PAM2}
\frac{\partial}{\partial t}\hat u(t, x) = \frac{1}{2}\Delta \hat u(t, x) +  (\hat u \diamond W)(t,x), \qquad t\geq 0, x\in \R^d.
\end{equation}
By discretising time and space, we obtain approximations of $u$ and $\hat u$ which admit the Feynman-Kac representations
\begin{align*}
u^\delta(t, x) & := \e\Big[ e^{\sum_{z \in \delta \Z^d} \lambda_\delta \omega_z L(t^\delta, z) } \Big], \\
\hat u^\delta(t, x) & := \e\Big[ e^{\sum_{z \in \delta \Z^d} \left(\lambda_\delta \omega_z L(t^\delta, z) - \phi(\lambda_\delta L(t^\delta, z))\right)} \Big],
\end{align*}
where $\e[\cdot]$ denotes expectation for a random walk on $\delta \Z^d$ with local time $L(t^\delta, \cdot)$ at time $t^\delta :=t\delta^{-2}$, and $(\omega_z)_{z\in \delta \Z^d}$ are i.i.d.\ standard normals that discretise the white noise $W$. We note that $u^\delta$ and $\hat u^\delta$ are instances of $Z_{\Omega_{\delta} ; \lambda}^{\omega}$ in \eqref{eq:Zom} and $\widehat Z_{\Omega_{\delta} ; \lambda}^{\omega}$ in \eqref{eq:ZWick}, with spin values $\sigma_z = L(t^\delta, z)$ for $z\in \delta \Z^d$. Although $\hat u^\delta$ and its associated Gibbs measure (polymer measure) $\widehat \p^\omega_{\delta; \lambda_\delta}$ on the underlying random walk is less physical than $u^\delta$ and its associated polymer measure $\p^\omega_{\delta; \lambda_\delta}$, it was recently pointed out in \cite[Example 59]{QRV22} that in the planar case $d=2$, one could recover the continuum limit of $\p^\omega_{\delta; \lambda_\delta}$ from that of $\widehat\p^\omega_{\delta; \lambda_\delta}$ via a change of measure because under the measure $\widehat\p^\omega_{\delta; \lambda_\delta}$,  the weight factor $e^{\sum_{z \in \delta \Z^d} \phi(\lambda_\delta L(t^\delta, z))}$ has a well-defined limit. This suggests that Theorem \ref{T:main} could also be the first step towards identifying the
continuum limit of $Z_{\Omega_{\delta} ; \lambda}^{\omega}$ and its associated Gibbs measure $\p_{\Omega_{\delta} ; \lambda}^{\omega}$.
}
\end{remark}

\begin{remark}[Unbounded Spins]{\rm
We can formulate alternative convergence criteria in Theorem \ref{T:main} without the assumption {\bf (A0)} that the spins $\sigma_x$ are uniformly
bounded. The assumption $|\sigma_x|\leq K$ is only used in the proof of Theorem \ref{T:main} to ensure that $\lambda_\delta K$ is sufficiently small
as $\delta\downarrow 0$. Since $\lambda_\delta= \hat \lambda \delta^{\frac{d}{2}-\gamma}$, it is enough that $|\sigma_x|\leq K_\delta$ with $K_\delta\ll \delta^{\gamma-\frac{d}{2}}$. Therefore we can approximate $\widehat Z_{\Omega_{\delta} ; \lambda}^{\omega}$ in \eqref{eq:ZWick} by
\begin{equation}\label{eq:ZWick2}
\widetilde Z_{\Omega_{\delta} ; \lambda}^{\omega}:=\e_{\Omega_{\delta}}^{\mathrm{ref}}\Bigg[e^{\sum_{x \in \Omega_{\delta}} \big(\lambda \omega_{x}\sigma_{x}-\phi(\lambda \sigma_x)\big)} \prod_{x\in \Omega_\delta} 1_{\{|\sigma_x|\leq K_\delta\}}\Bigg].
\end{equation}
The $L^1$-norm of the error of this approximation can be bounded by
\begin{equation}\label{eq:A0'}
\E[\widehat Z_{\Omega_{\delta} ; \lambda}^{\omega}-\widetilde Z_{\Omega_{\delta} ; \lambda}^{\omega}] = \p_{\Omega_{\delta}}^{\mathrm{ref}}( |\sigma_x|>K_\delta \mbox{ for some } x\in \Omega_\delta).
\end{equation}
Therefore the assumption {\bf (A0)} can be replaced by the assumption {\bf (A0')}: For some choice of $K_\delta\ll \delta^{\gamma-\frac{d}{2}}$,
the r.h.s.\ of \eqref{eq:A0'} tends to $0$ as $\delta\downarrow 0$. For the same choice of $K_\delta$, we can replace $\p_{\Omega_{\delta}}^{\mathrm{ref}}$ by the conditional law
$$
\widetilde \p_{\Omega_{\delta}}^{\mathrm{ref}}(\cdot) = \p_{\Omega_{\delta}}^{\mathrm{ref}}(\,\cdot\, | \, \forall\, x\in \Omega_\delta, |\sigma_x|\leq K_\delta)
$$
and rewrite
\begin{equation*}
\widetilde Z_{\Omega_{\delta} ; \lambda}^{\omega} =\p_{\Omega_{\delta}}^{\mathrm{ref}}\big(|\sigma_x|\leq K_\delta \, \forall\, x\in \Omega_\delta\big)
\cdot \widetilde \e_{\Omega_{\delta}}^{\mathrm{ref}}\Big[e^{\sum_{x \in \Omega_{\delta}} \big(\lambda \omega_{x}\sigma_{x}-\phi(\lambda \sigma_x)\big)}\Big].
\end{equation*}
Assumptions {\bf (A1)-(A3)} should then also be modified accordingly with $\e_{\Omega_{\delta}}^{\mathrm{ref}}[\cdot]$ therein replaced by $\widetilde\e_{\Omega_{\delta}}^{\mathrm{ref}}[\cdot]$.
}
\end{remark}

\section{Proof of Theorem \ref{T:main}}

In this section, we prove the main result of the paper. Recall from \eqref{eq:ZWick} the normalised partition function
$$
\widehat Z_{\Omega_{\delta} ; \lambda}^{\omega}:=\e_{\Omega_{\delta}}^{\mathrm{ref}}\left[e^{\sum_{x \in \Omega_{\delta}} \big(\lambda \omega_{x}\sigma_{x}-\phi(\lambda \sigma_x)\big)}\right], \qquad \mbox{where} \quad \lambda=\lambda_\delta=\hat \lambda \delta^{\frac{d}{2}-\gamma}.
$$
For $x \in \Omega_\delta$, let $\eta_x=\eta_x(\sigma, \omega):= e^{\lambda\omega_x\sigma_x-\phi(\lambda\sigma_x)}-1-\lambda\omega_x\sigma_x.$
The starting point of our analysis is the expansion
\begin{align}
\widehat Z_{\Omega_{\delta} ; \lambda}^{\omega}&=\mathrm{E}_{\Omega_{\delta}}^{\mathrm{ref}}\Big[\prod_{x \in \Omega_\delta}(1+\lambda\omega_x\sigma_x+\eta_x)\Big]\nonumber\\
&=1+\sum_{k=1}^{|\Omega_\delta|}\sum_{\{x_1, \ldots, x_k\}\subset \Omega_{\delta} \atop x_i\neq x_j \mbox{\scriptsize\  for } i\neq j}\mathrm{E}_{\Omega_{\delta}}^{\mathrm{ref}}\Big[\prod_{i=1}^k(\lambda\omega_{x_i}\sigma_{x_i}+\eta_{x_i})\Big] \notag \\
& =1+\sum_{k=1}^{M}\sum_{\{x_1, \ldots, x_k\}\subset \Omega_{\delta} \atop x_i\neq x_j \mbox{\scriptsize\  for } i\neq j}\mathrm{E}_{\Omega_{\delta}}^{\mathrm{ref}}\Big[\prod_{i=1}^k(\lambda\omega_{x_i}\sigma_{x_i}+\eta_{x_i})\Big]+R_{M, \delta}, \label{truncated sum}
\end{align}
where we have truncated the sum at index $k=M\in \N$, and $R_{M, \delta}$ denotes the remainder. We will show in Section \ref{S:Truncate}
that this truncated sum converges to the series in \eqref{eq:MCalZ} truncated at $k=M$, i.e.,
\begin{equation} \label{eq:Lim1}
\sum_{k=1}^{M}\sum_{\{x_1, \ldots, x_k\}\subset \Omega_{\delta} \atop x_i\neq x_j \mbox{\scriptsize\  for } i\neq j} \!\!\!\!\!\! \mathrm{E}_{\Omega_{\delta}}^{\mathrm{ref}}\Big[\prod_{i=1}^k(\lambda\omega_{x_i}\sigma_{x_i}+\eta_{x_i})\Big]
\Aston{\delta} \sum_{k=1}^M \frac{\hat\lambda^k}{k!} \idotsint_{\Omega^k} \psi_0(x_1, \ldots, x_k) \, {\rm d}W(x_1) \ldots {\rm d}W(x_k),
\end{equation}
and we will show in Section \ref{S:Remainder} that
\begin{equation} \label{eq:Lim2}
\lim_{M\to\infty} \limsup_{\delta\downarrow 0} \E[R^2_{M, \delta}] = 0.
\end{equation}
Since assumptions {\bf (A1)-(A2)} imply that the series in \eqref{eq:MCalZ} is convergent in $L^2$, the conclusion of Theorem \ref{T:main}
follow immediately from \eqref{eq:Lim1} and \eqref{eq:Lim2}.
\qed

\subsection{Convergence of the truncated sum} \label{S:Truncate}
In this subsection, we verify \eqref{eq:Lim1}. In the truncated sum in \eqref{truncated sum}, for each $1\leq k\leq M$, we can further decompose
the $k$-th term into
\begin{equation}\label{4.5}
\begin{aligned}
\sum_{\{x_1, \ldots, x_k\}\subset \Omega_{\delta} \atop x_i\neq x_j \mbox{\scriptsize\  for } i\neq j} \!\!\!\!\!\!
\mathrm{E}_{\Omega_{\delta}}^{\mathrm{ref}}\Big[\prod_{i=1}^k(\lambda\omega_{x_i}\sigma_{x_i}+\eta_{x_i})\Big]
&= \!\!\!\!\!\! \sum_{\{x_1, \ldots, x_k\}\subset \Omega_{\delta}\atop x_i\neq x_j \mbox{\scriptsize\  for } i\neq j}
\!\!\!\!\!\! \lambda^k \mathrm{E}_{\Omega_{\delta}}^{\mathrm{ref}}[\sigma_{x_1} \ldots \sigma_{x_k}]\prod_{i=1}^k \omega_{x_i}\\
& \ \ \
+ \!\!\!\!\!\! \underbrace{\sum_{\{x_1, \ldots, x_k\}\subset \Omega_{\delta}\atop x_i\neq x_j \mbox{\scriptsize\  for } i\neq j}
\sum_{(I, J)\vdash \{x_1, \ldots, x_k\} \atop I\neq \emptyset} \!\!\!\!\!
\mathrm{E}_{\Omega_{\delta}}^{\mathrm{ref}}\Big[\prod_{u \in I}\eta_u\prod_{v \in J}\lambda\omega_v\sigma_v\Big]}_{\Ei_{k, \delta}},
\end{aligned}
\end{equation}
where $(I, J) \vdash \{x_1, \ldots, x_k\}$ denotes a partition with $I\cup J=\{x_1, \ldots, x_k\}$ and $I \cap J=\emptyset$.

The first term in \eqref{4.5} is in fact the dominant term. Indeed, if we sum it over $1\leq k\leq M$, then by the choice
of $\lambda=\hat \lambda \delta^{\frac{d}{2}-\gamma}$ and the definition of $\psi_{\delta}$ in \eqref{eq:psidelta}, we have
\begin{align*}
\sum_{k=1}^M  \sum_{\{x_1, \ldots, x_k\}\subset \Omega_{\delta}\atop x_i\neq x_j \mbox{\scriptsize\  for } i\neq j}
\!\!\!\!\!\! \lambda^k \mathrm{E}_{\Omega_{\delta}}^{\mathrm{ref}}[\sigma_{x_1} \ldots \sigma_{x_k}]\prod_{i=1}^k \omega_{x_i}
& = \sum_{k=1}^M \hat\lambda^k \!\!\!\!\! \sum_{\{x_1, \ldots, x_k\}\subset \Omega_{\delta}\atop x_i\neq x_j \mbox{\scriptsize\  for } i\neq j}
\!\!\!\!\!\! \psi_{\delta}(x_1, \ldots, x_k)\prod_{i=1}^k(\delta^{\frac{d}{2}}\omega_{x_i}) \\
& \Aston{\delta} \sum_{k=1}^M \frac{\hat\lambda^k}{k!} \idotsint_{\Omega^k} \psi_0(x_1, \ldots, x_k) \, {\rm d}W(x_1) \ldots {\rm d}W(x_k),
\end{align*}
where the convergence follows from assumption {\bf (A1)} and \cite[Theorem 2.3]{CSZ17a}.

Therefore to prove \eqref{eq:Lim1}, it only remains to show that for each $k\in\N$, the second term in \eqref{4.5}
satisfies
\begin{align}\label{4.8}
\Ei_{k, \delta} := \sum_{\{x_1, \ldots, x_k\}\subset \Omega_{\delta}\atop x_i\neq x_j \mbox{\scriptsize\  for } i\neq j}
\sum_{(I, J)\vdash \{x_1, \ldots, x_k\} \atop I\neq \emptyset} \!\!\!\!\!
\mathrm{E}_{\Omega_{\delta}}^{\mathrm{ref}}\Big[\prod_{u \in I}\eta_u\prod_{v \in J}\lambda \sigma_v \omega_v\Big] \xrightarrow[\delta\downarrow 0]{}  0 \ \ \  \rm{in \ probability}.
\end{align}
To prove this, we first observe that for two different sets $\{x_1, \ldots, x_k\}\neq \{\tilde x_1, \ldots, \tilde x_k\}$, the corresponding
summands in \eqref{4.8} are $L^2$-orthogonal to each other.

\begin{lemma}\label{L:ortho}
Let $I, J, \tilde I, \tilde J \subset \Omega_\delta$ satisfy $I \cap J=\emptyset$, $\tilde I \cap \tilde J=\emptyset$, and
$I \cup J\neq \tilde I \cup \tilde J$.  Then,
\begin{align}
\mathbb{E}\left[\mathrm{E}_{\Omega_{\delta}}^{\mathrm{ref}}\left[\prod_{u \in I}\eta_{u} \prod_{v \in J}\lambda\sigma_v\omega_v\right]\mathrm{E}_{\Omega_{\delta}}^{\mathrm{ref}}\Bigg[\prod_{u \in \tilde{I}}\eta_{u}\prod_{v \in \tilde{J}}\lambda\sigma_v\omega_v\Bigg]\right]=0.\nonumber
\end{align}
\end{lemma}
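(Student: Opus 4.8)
The plan is to integrate out the disorder $\omega$ first, exploiting its independence across sites. The two reference expectations should be read as acting on two independent spin copies, say $\sigma$ and $\sigma'$, while the disorder field $\omega=(\omega_x)_{x\in\Omega_\delta}$ is shared. Thus the quantity to evaluate is
\begin{equation*}
\E\Big[\mathrm{E}^{\mathrm{ref}}_{\sigma}\mathrm{E}^{\mathrm{ref}}_{\sigma'}\Big[\prod_{u\in I}\eta_u(\sigma,\omega)\prod_{v\in J}\lambda\sigma_v\omega_v\prod_{u\in\tilde I}\eta_u(\sigma',\omega)\prod_{v\in\tilde J}\lambda\sigma'_v\omega_v\Big]\Big].
\end{equation*}
First I would use Fubini to bring the $\omega$-expectation inside, holding $\sigma,\sigma'$ fixed. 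This is legitimate because {\bf (A0)} bounds the spins by $K$ and $\lambda=\lambda_\delta$ is small, so that $|\lambda\sigma_x|<\lambda_0$ and $\phi(\lambda\sigma_x)$ in \eqref{2.1} is finite; all the exponential moments appearing are then integrable.

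Next, since the $\omega_x$ are i.i.d.\ and independent of the spins, the inner $\omega$-expectation factorizes over the sites $x\in\Omega_\delta$. At each site I collect the factors depending on $\omega_x$: an $\eta_x(\sigma,\omega)$ if $x\in I$, a $\lambda\sigma_x\omega_x$ if $x\in J$, and the corresponding $\sigma'$-factors if $x\in\tilde I$ or $x\in\tilde J$. The crucial step then uses the hypothesis $I\cup J\neq\tilde I\cup\tilde J$: there exists a site $x_0$ belonging to exactly one of the two unions, say $x_0\in(I\cup J)\setminus(\tilde I\cup\tilde J)$ (the reverse case is symmetric, swapping the roles of $\sigma$ and $\sigma'$). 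At $x_0$ only a single $\omega_{x_0}$-factor occurs, coming from the first expectation.

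The computation at $x_0$ splits into two cases. If $x_0\in J$, the factor is $\lambda\sigma_{x_0}\omega_{x_0}$, whose $\omega_{x_0}$-expectation (with $\sigma_{x_0}$ fixed) is $\lambda\sigma_{x_0}\E[\omega_{x_0}]=0$. If $x_0\in I$, the factor is $\eta_{x_0}(\sigma,\omega)$, and here the Wick-type subtraction of $\phi(\lambda\sigma_x)$ in \eqref{eq:ZWick} is designed precisely so that
\begin{equation*}
\E[\eta_{x_0}\mid\sigma]=e^{-\phi(\lambda\sigma_{x_0})}\,\E\big[e^{\lambda\sigma_{x_0}\omega_{x_0}}\big]-1-\lambda\sigma_{x_0}\E[\omega_{x_0}]=1-1-0=0,
\end{equation*}
where the first term equals $1$ by the very definition $\phi(\lambda\sigma_{x_0})=\log\E[e^{\lambda\sigma_{x_0}\omega_{x_0}}]$. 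In either case the $x_0$-factor vanishes after integrating $\omega_{x_0}$, so the factorized product over sites is zero for every fixed $\sigma,\sigma'$; integrating over the two spin copies then yields $0$, as claimed. The only genuine point to verify is that $\eta_x$ has zero disorder mean, which is exactly the purpose of the normalization in \eqref{eq:ZWick}; beyond this there is no real obstacle, only the bookkeeping of the site-by-site factorization and the routine integrability needed to apply Fubini.
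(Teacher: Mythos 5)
Your proof is correct and follows essentially the same route as the paper: both pass to two independent spin copies $\sigma,\sigma'$ with shared disorder, apply Fubini, use site-by-site independence of $(\omega_x)$ to isolate a site lying in exactly one of $I\cup J$ and $\tilde I\cup\tilde J$, and kill the product via $\E[\omega_{x_0}]=0$ or $\E[\eta_{x_0}\mid\sigma]=0$. The only difference is cosmetic: you spell out the computation showing $\E[\eta_{x_0}\mid\sigma]=0$ and the integrability needed for Fubini, which the paper simply asserts.
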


\begin{proof}
We can rewrite the expectation as
\begin{align}
&\mathrm{E}_{\Omega_{\delta}}^{\mathrm{ref},\otimes 2}\left[\mathbb{E}\left[\prod_{u \in I}\eta_{u}(\sigma, \omega) \prod_{v \in J}\lambda\sigma_v\omega_v\prod_{\tilde u \in \tilde{I}}\eta_{\tilde u}(\sigma', \omega)\prod_{\tilde v \in \tilde{J}}\lambda \sigma'_{\tilde v}\omega_{\tilde v}\right]\right], \label{eq:ortho}
\end{align}
where $\mathrm{E}_{\Omega_{\delta}}^{\mathrm{ref},\otimes 2}$ denotes the expectation with respect to $\sigma$ and $\sigma'$, two independent spin configurations with law $\p_{\Omega_{\delta}}^{\mathrm{ref}}$. Recall that conditional on $\sigma$ and
$\sigma'$,
$$
\eta_x(\sigma, \omega)= e^{\lambda\omega_x\sigma_x-\phi(\lambda\sigma_x)}-1-\lambda\omega_x\sigma_x
$$
and $\eta_x(\sigma', \omega)$ depend only on $\omega_x$. Therefore they are independent of $(\omega_y, \eta_y(\sigma, \omega), \eta_y(\sigma', \omega))$ for $y\neq x$. The assumption $I \cup J\neq \tilde I \cup \tilde J$ implies that there is some site $x\in I\cup J\cup \tilde I \cup \tilde J$ which appears exactly once in the product in \eqref{eq:ortho}. By the independence of $(\omega_y, \eta_y(\sigma, \omega), \eta_y(\sigma', \omega))$ for different $y\in \Omega_\delta$ and the fact that $\E[\eta_y(\sigma, \omega)]=\E[\eta_y(\sigma', \omega)]=\E[\omega_y]=0$, it follows easily
that the expectation in \eqref{eq:ortho} equals $0$.
\end{proof}

Applying Lemma \ref{L:ortho}, to prove \eqref{4.8}, it then suffices to show that
\begin{align}
\label{4.10}
\E[\Ei_{k, \delta}^2] =
\sum_{\{x_1, \ldots, x_k\}\subset \Omega_{\delta}\atop x_i\neq x_j \mbox{\scriptsize\  for } i\neq j}
\E\left[\left(\sum_{(I, J)\vdash \{x_1, \ldots, x_k\} \atop I\neq \emptyset} \!\!\!\!\!
\mathrm{E}_{\Omega_{\delta}}^{\mathrm{ref}}\Big[\prod_{u \in I}\eta_u\prod_{v \in J}\lambda \sigma_v \omega_v\Big]\right)^2\right]
\xrightarrow[\delta\downarrow 0]{}  0 .
\end{align}
Note that the sum over $(I, J)\vdash \{x_1, \ldots, x_k\}$, $I\neq \emptyset$, contains $2^k-1$ terms. Using $(\sum_{i=1}^{n}a_i)^2\leq n\sum_{i=1}^{n}a_i^2$, we can bound
\begin{align}
\E[\Ei_{k, \delta}^2] & \leq 2^k
\sum_{\{x_1, \ldots, x_k\}\subset \Omega_{\delta}\atop x_i\neq x_j \mbox{\scriptsize\  for } i\neq j}
\sum_{(I, J)\vdash \{x_1, \ldots, x_k\} \atop I\neq \emptyset} \!\!\!\!\!
\E\left[ \mathrm{E}_{\Omega_{\delta}}^{\mathrm{ref}}\Big[\prod_{u \in I}\eta_u\prod_{v \in J}\lambda \sigma_v \omega_v\Big]^2 \right] \notag \\
& = 2^k \sum_{\iota =1}^k \sum_{I, J \subset \Omega_\delta, I\cap J=\emptyset \atop |I|=\iota, |J|=k-\iota} \E\left[ \mathrm{E}_{\Omega_{\delta}}^{\mathrm{ref}}\Big[\prod_{u \in I}\eta_u\prod_{v \in J}\lambda \sigma_v \omega_v\Big]^2 \right] \notag \\
& = 2^k \sum_{\iota =1}^k \sum_{I, J \subset \Omega_\delta, I\cap J=\emptyset \atop |I|=\iota, |J|=k-\iota} \mathrm{E}_{\Omega_{\delta}}^{\mathrm{ref}, \otimes 2}\left[ \prod_{u \in I} \E[\eta_u(\sigma, \omega) \eta_u(\sigma', \omega)] \prod_{v \in J}\lambda^2 \sigma_v\sigma'_v\right], \label{2.8}
\end{align}
where $\sigma'$ is an independent copy of $\sigma$ with law $\p_{\Omega_{\delta}}^{\mathrm{ref}}$. Therefore to prove \eqref{4.10}, it suffices to show that for each $1\leq \iota \leq k$,
\begin{align}\label{4.11}
\sum_{I, J \subset \Omega_\delta, I\cap J=\emptyset \atop |I|=\iota, |J|=k-\iota}  \mathrm{E}_{\Omega_{\delta}}^{\mathrm{ref},\otimes 2}\left[\prod_{u \in I}\mathbb{E}[\eta_{u}(\sigma, \omega)\eta_{u}(\sigma',\omega)]\prod_{v \in J}\lambda^2\sigma_v\sigma_v'\right]
\xrightarrow[\delta\downarrow 0]{}  0.
\end{align}

To see heuristically why \eqref{4.11} holds, recall from the discussion leading to \eqref{eq:wtZ3} that $\lambda=\lambda_\delta=\hat \lambda \delta^{\frac{d}{2}-\gamma}$ is chosen such that if each spin $\sigma_u$ is matched with a factor of $\lambda$ and $u$ is summed over $\Omega_\delta$, then the spin correlations $\mathrm{E}_{\Omega_{\delta}}^{\mathrm{ref}}[\sigma_{x_1^{\delta}} \ldots \sigma_{x_k^{\delta}}]$ would be properly
normalised and we will have convergence as in \eqref{eq:wtZ3}. However, if some of the spins among $\sigma_{x_i^{\delta}}$ coincide, then by assumption
{\bf (A3)}, assigning one factor of $\lambda$ to each spin gives more powers of $\lambda$ than needed to normalize $\mathrm{E}_{\Omega_{\delta}}^{\mathrm{ref}}[\sigma_{x_1^{\delta}} \ldots \sigma_{x_k^{\delta}}]$.
This is exactly what happens when we expand $\E[\eta_{u}(\sigma, \omega)\eta_{u}(\sigma',\omega)]$ in powers of $\lambda$, $\sigma_u$ and $\sigma_u'$.
We will perform this expansion next, which is a bit involved.

Recall that $\eta_x(\sigma, \omega)= e^{\lambda\omega_x\sigma_x-\phi(\lambda\sigma_x)}-1-\lambda\omega_x\sigma_x$ and
$\phi(a) = \log \E[e^{a \omega_x}]$. Therefore
\begin{align}\label{4.12}
\mathbb{E}[\eta_{u}(\sigma, \omega)\eta_{u}(\sigma',\omega)]
&=\mathbb{E}[(e^{\lambda\omega_u\sigma_u-\phi(\lambda \sigma_u)}-1-\lambda\omega_u\sigma_u)(e^{\lambda\omega_u\sigma'_u-\phi(\lambda \sigma'_u)}-1-\lambda\omega_u\sigma'_u)]\nonumber\\
&=e^{\phi(\lambda (\sigma_u+\sigma'_u))-\phi(\lambda \sigma_u)-\phi(\lambda \sigma'_u)}\nonumber
-1+\lambda^2\sigma_u\sigma'_u \nonumber\\
&\qquad -\lambda\sigma_u\mathbb{E}[\omega_u e^{\lambda\omega_u\sigma'_u-\phi(\lambda \sigma'_u)}]-\lambda\sigma'_u\mathbb{E}[\omega_u e^{\lambda\omega_u\sigma_u-\phi(\lambda \sigma_u)}].
\end{align}
Since $\phi(a)$ is assumed to be finite for all $|a|<a_0$ for some $a_0>0$ and $\phi(0)=0$, $\phi$ must be analytic on the ball $|z|<z_0$ for some $z_0>0$ and has power series expansion
\begin{equation} \label{4.13}
\phi(z)=\sum_{m=1}^{\infty}\frac{\kappa_m}{m!}z^m,
\end{equation}
where $\kappa_m=\frac{{\rm d}^{m}\phi(z)}{{\rm d}z^{m}}\Bigr|_{z=0}$ is the $m$-th cumulant of $\omega_x$ with $\kappa_1=\E[\omega_x]=0$
and $\kappa_2=\mathbb{V}{\rm ar}(\omega_x^2)=1$. Since $|\sigma_u|\leq K$ by assumption {\bf (A0)}, for $\lambda=\lambda_\delta$ sufficiently
small, we can rewrite the last term in \eqref{4.12} (similarly for the second last term) as
\begin{align}
\label{4.14}  \lambda\sigma'_u\mathbb{E}[\omega_u e^{\lambda\omega_u\sigma_u-\phi(\lambda \sigma_u)}]
&=\lambda\sigma'_u\frac{{\rm d}\phi(z)}{{\rm d}z}\Bigr|_{z=\lambda\sigma_u}
=\lambda^2\sigma_u\sigma'_u+\sum_{m=3}^{\infty}\frac{\kappa_m \lambda^m}{(m-1)!}(\sigma_u)^{m-1}\sigma'_u.
\end{align}
Substituting into  \eqref{4.12} then yields
\begin{align}\label{4.12a}
\mathbb{E}[\eta_{u}(\sigma, \omega)\eta_{u}(\sigma',\omega)]
&=e^{\phi(\lambda (\sigma_u+\sigma'_u))-\phi(\lambda \sigma_u)-\phi(\lambda \sigma'_u)}
-1-\lambda^2\sigma_u\sigma'_u \nonumber\\
&\ \ -\sum_{m=3}^{\infty}\frac{\kappa_m \lambda^m}{(m-1)!}(\sigma'_u)^{m-1}\sigma_u - \sum_{m=3}^{\infty}\frac{\kappa_m \lambda^m}{(m-1)!}(\sigma_u)^{m-1}\sigma'_u.
\end{align}
Using \eqref{4.13} and the binomial expansion yields
\begin{align}\label{4.15}
\phi(\lambda (\sigma_u+\sigma'_u))-\phi(\lambda \sigma_u)-\phi(\lambda \sigma'_u)=\sum_{m=2}^{\infty}\frac{\kappa_m}{m!}\lambda^{m}\sum_{l=1}^{m-1}\binom{m}{l}
(\sigma_u)^{m-l}(\sigma'_u)^{l}.
\end{align}
Again by Taylor expansion, for $\lambda=\lambda_\delta$ sufficiently small, we have
\begin{equation}\label{4.16}
\begin{aligned}
&e^{\phi(\lambda (\sigma_u+\sigma'_u))-\phi(\lambda \sigma_u)-\phi(\lambda \sigma'_u)}-1 \\
=\ & \sum_{m=2}^{\infty}\frac{\kappa_m}{m!}\lambda^{m}\sum_{l=1}^{m-1}\binom{m}{l}
(\sigma_u)^{m-l}(\sigma'_u)^{l}+\sum_{j=2}^{\infty}\frac{1}{j!}\left[\sum_{m=2}^{\infty}\frac{\kappa_m}{m!}\lambda^{m}\sum_{l=1}^{m-1}\binom{m}{l}
(\sigma_u)^{m-l}(\sigma'_u)^{l}\right]^j.
\end{aligned}
\end{equation}
Since $\kappa_2=1$, we can decompose the first term in \eqref{4.16} by separating the contributions from $m=2$, $m=3$ and $l\in \{1, m-1\}$,
versus $m\geq 4$ and $l\in \{2, \ldots, m-2\}$ to rewrite it as
\begin{align*}
&\lambda^2\sigma_u\sigma'_u+\sum_{m=3}^{\infty}\frac{\kappa_m\lambda^{m}}{(m-1)!}
\Big((\sigma_u)^{m-1}\sigma'_u+ \sigma_u(\sigma'_u)^{m-1}\Big)
+\sum_{m=4}^{\infty}\frac{\kappa_m\lambda^{m}}{m!}\sum_{l=2}^{m-2}\binom{m}{l}
(\sigma_u)^{m-l}(\sigma'_u)^{l}.
\end{align*}
Substituting into \eqref{4.16} and then \eqref{4.12a} gives
\begin{equation}\label{4.18}
\begin{aligned}
\mathbb{E}[\eta_{u}(\sigma, \omega)\eta_{u}(\sigma',\omega)]& =\sum_{m=4}^{\infty}\frac{\kappa_m\lambda^{m}}{m!}\sum_{l=2}^{m-2}\binom{m}{l}
(\sigma_u)^{m-l}(\sigma'_u)^{l} \\
& \quad +\sum_{j=2}^{\infty}\frac{1}{j!}\left[\sum_{m=2}^{\infty}\frac{\kappa_m \lambda^{m}}{m!}\sum_{l=1}^{m-1}\binom{m}{l}
(\sigma_u)^{m-l}(\sigma'_u)^{l}\right]^j.
\end{aligned}
\end{equation}
Next we will expand the second term in the r.h.s.\ of \eqref{4.18} and perform resummation to write
\begin{align}\label{4.21}
\mathbb{E}[\eta_{u}(\sigma, \omega)\eta_{u}(\sigma',\omega)]
&=\sum_{m=4}^{\infty}\sum_{l=2}^{m-2}\lambda^{m}a_{m,l}
(\sigma_u)^{m-l}(\sigma'_u)^{l},
\end{align}
where $a_{m,l}$ are constants to be determined later. To justify the resummation, we need to show absolute summability.
For this, we first need to bound $\frac{|\kappa_m|}{m!}$. Since the power series for $\phi$ in \eqref{4.13} has a
positive radius of convergence, there exists a finite constant $\mathcal{C}>0$ such that
\begin{align}\label{4.19}
\frac{|\kappa_m|}{m!}\leq \mathcal{C}^m\ \ \mbox{for any } m\geq 2.
\end{align}
Combined with the assumption {\bf (A0)} that $|\sigma_u|\leq K$ for some finite $K$, we can bound
$$
\sum_{m=2}^{\infty}\sum_{l=1}^{m-1}\left|\frac{\kappa_m \lambda^{m}}{m!}\binom{m}{l}
(\sigma_u)^{m-l}(\sigma'_u)^{l}\right|\leq \sum_{m=2}^{\infty}
\mathcal{C}^{m} \lambda^{m}K^m\sum_{l=1}^{m-1}\binom{m}{l}\leq \sum_{m=2}^{\infty} (2\lambda K\mathcal{C})^{m},
$$
which is convergent when $\lambda=\lambda_\delta=\hat \lambda \delta^{\frac{d}{2}-\gamma}$ is small enough.
This implies that the r.h.s.\ of \eqref{4.18} is absolutely convergent, and we can rearrange terms into a power
series in $\lambda$ as in \eqref{4.21}. To identify the constants $a_{m, l}$, for $j\geq 2$, we expand
\begin{align}\label{4.22}
& \sum_{j=2}^{\infty}\frac{1}{j!} \left[\sum_{m=2}^{\infty}\frac{\kappa_m \lambda^{m}}{m!}\sum_{l=1}^{m-1}\binom{m}{l}
(\sigma_u)^{m-l}(\sigma'_u)^{l}\right]^j \notag \\
=\ & \sum_{j=2}^{\infty}\frac{1}{j!}  \sum_{m_1,\cdots, m_j\geq2}\sum_{\substack{1\leq l_i\leq m_i-1, \notag\\
i=1,\cdots,j}}
\prod_{i=1}^j\left(\frac{\kappa_{m_i} \lambda^{m_i}}{m_i!} \binom{m_i}{l_i}
(\sigma_u)^{(m_i-l_i)}(\sigma'_u)^{l_i}\right) \nonumber\\
=:& \sum_{j=2}^{\infty}\frac{1}{j!} \sum_{m=2j}^{\infty}\sum_{l=j}^{m-j}
c_{j,m,l}\lambda^{m}(\sigma_u)^{m-l}(\sigma'_u)^{l} \notag \\
=: & \sum_{m=4}^{\infty}\lambda^{m}\sum_{l=2}^{m-2}d_{m,l}
(\sigma_u)^{m-l}(\sigma'_u)^{l},
\end{align}
where for $j\geq 2$, $m\geq 2j$, and $j\leq l\leq m-j$,
\begin{align}\label{4.23a}
c_{j, m, l}& :=\sum_{\substack{m_1,\cdots, m_j\geq2\\
m_1+\cdots+ m_j=m}
}\sum_{\substack{1\leq l_i\leq m_i-1,\\
i=1,\cdots,j\\
l_1+\cdots+l_j=l}}
\prod_{i=1}^j\frac{\kappa_{m_i}}{m_i!} \binom{m_i}{l_i},
\end{align}
and
\begin{eqnarray*}
d_{m,l}=\begin{cases} \sum\limits_{j=2}^{l}\frac{1}{j!}c_{j,m,l}, &  \mbox{if $2\leq l\leq \lfloor \frac{m}{2} \rfloor$},\\
    d_{m, m-l}, &  \mbox{if $\lfloor \frac{m}{2} \rfloor< l\leq m-2$}.
    \end{cases}
\end{eqnarray*}
Combined with \eqref{4.18}, it follows that \eqref{4.21} holds with
\begin{align}\label{4.24}
    a_{m,l}:=\frac{\kappa_m}{m!}\binom{m}{l}+d_{m,l} = a_{m, m-l}.
\end{align}
This concludes the expansion of $\mathbb{E}[\eta_{u}(\sigma, \omega)\eta_{u}(\sigma',\omega)]$ stated in \eqref{4.21}.
\bigskip

Substituting \eqref{4.21} into \eqref{4.11}, our goal is to show
\begin{align}
\label{4.26}
\sum_{I, J \subset \Omega_\delta, I\cap J=\emptyset \atop |I|=\iota, |J|=k-\iota}  \mathrm{E}_{\Omega_{\delta}}^{\mathrm{ref},\otimes 2}\left[\prod_{u \in I}\left(\sum_{m=4}^{\infty}\sum_{l=2}^{m-2}\lambda^{m}a_{m,l}
(\sigma_u)^{m-l}(\sigma'_u)^{l}\right)\prod_{v \in J}\lambda^2\sigma_v\sigma_v'\right]
\xrightarrow[\delta\downarrow 0]{}  0.
\end{align}
To control the convergence of the series, first recall from \eqref{4.19} that $\frac{|\kappa_m|}{m!}\leq \mathcal{C}^m$. Therefore by \eqref{4.24}, for $2\leq l\leq \lfloor \frac{m}{2} \rfloor$, we have
\begin{align}\label{4.27}
 |a_{m,l}|=|a_{m, m-l}| \leq \mathcal{C}^{m}\binom{m}{l}+\sum\limits_{j=2}^{l}\frac{1}{j!}|c_{j,m,l}|.
\end{align}
Recalling the definition of $c_{j, m,l}$ from \eqref{4.23a}, we have
$$
|c_{j,m,l}|\leq \mathcal{C}^{m}\sum_{\substack{m_1,\cdots,m_j\geq2\\
m_1+\cdots+m_j=m}
}\sum_{\substack{1\leq l_i\leq m_i-1,\\
i=1,\cdots,j }}
\prod_{i=1}^j\binom{m_i}{l_i} \leq \mathcal{C}^{m}\sum_{\substack{m_1,\cdots, m_j\geq2\\
m_1+\cdots+m_j=m}
}\prod_{i=1}^j 2^{m_i}
= (2\mathcal{C})^{m}\binom{m-j-1}{j-1},
$$
where the last identity holds by elementary combinatorial considerations. Substituting this into \eqref{4.27} then yields that,
for $2\leq l\leq \lfloor \frac{i}{2} \rfloor$,
$$
 |a_{m,l}| = |a_{m, m-l}| \leq \mathcal{C}^{m}\binom{m}{l}+(2\mathcal{C})^{m}\sum\limits_{j=2}^{l}\frac{1}{j!}\binom{m-j-1}{j-1}\leq (2\mathcal{C})^{m}+(2\mathcal{C})^{m}\sum_{j=2}^{l}\frac{1}{j!}2^{m-j-1} \leq 2\cdot (4\Ci)^m.
$$
Therefore
\begin{align} \label{4.28}
\sum\limits_{l=2}^{m-2}|a_{m,l}|\leq 2m \cdot (4\Ci)^m \leq (8\Ci)^m.
\end{align}
Applying this bound and the assumption {\bf (A0)} that $|\sigma_u|\leq K$ for any $u\in\Omega_{\delta}$, the expectation in \eqref{4.26} can be
bounded from above by
\begin{align}\label{4.29}
\mathrm{E}_{\Omega_{\delta}}^{\mathrm{ref},\otimes 2}\left[\prod_{u \in I}\left(\sum_{m=4}^{\infty}\sum_{l=2}^{m-2}\left|\lambda^{m}a_{m,l}
(\sigma_u)^{m-l}(\sigma'_u)^{l}\right|\right)\prod_{v \in J}\left|\lambda^2\sigma_v\sigma_v'\right|\right]\leq  \left(\sum_{m=4}^{\infty}(8\mathcal{C}\lambda K)^{m}\right)^{|I|}(\lambda K)^{2|J|},
\end{align}
which is finite for $\lambda=\lambda_\delta:=\hat \lambda \delta^{\frac{d}{2}-\gamma}$ sufficiently small.
It follows that the expectation in  \eqref{4.26} is absolutely convergent and can be expanded as
\begin{align}
&\mathrm{E}_{\Omega_{\delta}}^{\mathrm{ref},\otimes 2}\left[\prod_{u \in I}\left(\sum_{m=4}^{\infty}\sum_{l=2}^{m-2}\lambda^{m}a_{m,l}
(\sigma_u)^{m-l}(\sigma'_u)^{l}\right)\prod_{v \in J}\lambda^2\sigma_v\sigma_v'\right]  \notag \\
&=\mathrm{E}_{\Omega_{\delta}}^{\mathrm{ref},\otimes 2}\left[\sum_{m_1,\cdots, m_{|I|}\geq 4}\sum_{\substack{2\leq l_i\leq m_i-2,\\
i=1,\cdots,|I|}}\prod_{i=1}^{|I|}\left(\lambda^{m_i} a_{m_i, l_i}
(\sigma_{u_i})^{m_i-l_i}(\sigma'_{u_i})^{l_i}\right)\prod_{v \in J}\lambda^2\sigma_v\sigma_v'\right]\nonumber\\
&= \sum_{m_1,\cdots, m_{|I|}\geq4}\sum_{\substack{2\leq l_i\leq m_i-2,\\
i=1,\cdots,|I|}}\mathrm{E}_{\Omega_{\delta}}^{\mathrm{ref},\otimes 2}\left[\prod_{i=1}^{|I|}\left(\lambda^{m_i}a_{m_i, l_i}
(\sigma_{u_i})^{m_i-l_i}(\sigma'_{u_i})^{l_i}\right)\prod_{v \in J}\lambda^2\sigma_v\sigma_v'\right], \label{4.30}
\end{align}
where we have assumed that $I=\{u_1,\cdots, u_{|I|}\}$. To show \eqref{4.26}, it suffices to show
\begin{align} \label{4.30a}
\sum_{I, J \subset \Omega_\delta, I\cap J=\emptyset \atop |I|=\iota, |J|=k-\iota}
\sum_{\substack{m_1,\cdots,m_{|I|}\geq4\\ 2\leq l_i\leq m_i-2,\\ i=1,\cdots,|I|}}
\left|\mathrm{E}_{\Omega_{\delta}}^{\mathrm{ref},\otimes 2}\Bigg[\prod_{i=1}^{|I|}\left(\lambda^{m_i}a_{m_i, l_i}
(\sigma_{u_i})^{m_i-l_i}(\sigma'_{u_i})^{l_i}\right)\prod_{v \in J}\lambda^2\sigma_v\sigma_v'\Bigg]\right|
\xrightarrow[\delta\downarrow 0]{}  0.
\end{align}
The absolute value in the left hand side of \eqref{4.30a} can be bounded by
\begin{align}\label{4.31}
\lambda^{2|J|}\left(\prod_{i=1}^{|I|}\lambda^{m_i}|a_{m_i, l_i}|\right)\left|\mathrm{E}_{\Omega_{\delta}}^{\mathrm{ref}}\left[\prod_{i=1}^{|I|}
(\sigma_{u_i})^{m_i-l_i}\prod_{v \in J}\sigma_v\right]\right|\left|\mathrm{E}_{\Omega_{\delta}}^{\mathrm{ref}}\left[\prod_{i=1}^{|I|}
(\sigma'_{u_i})^{l_i}\prod_{v \in J}\sigma_v'\right]\right|.
\end{align}
Using assumption {\bf (A3)}, where $(r)_2 := r \mbox{ (mod 2)}$, there is a universal constant $C\geq 1$ such that
\begin{align}
\left|\mathrm{E}_{\Omega_{\delta}}^{\mathrm{ref}}\Bigg[\prod_{i=1}^{|I|}
(\sigma_{u_i})^{m_i-l_i}\prod_{v \in J}\sigma_v\Bigg]\right|
&\leq C^{\sum\limits_{i=1}^{|I|}[m_i-l_i-(m_i-l_i)_2]}\left|\e_{\Omega_{\delta}}^{\mathrm{ref}}\Bigg[\prod_{i=1}^{|I|}\sigma_{u_i}^{(m_i-l_i)_2}\prod_{v \in J}\sigma_v\Bigg]\right|\nonumber
\end{align}
and
\begin{align}
\left|\mathrm{E}_{\Omega_{\delta}}^{\mathrm{ref}}\Bigg[\prod\limits_{i=1}^{|I|}
(\sigma'_{u_i})^{l_i}\prod\limits_{v \in J}\sigma_v'\Bigg]\right|
&\leq C^{\sum\limits_{i=1}^{|I|}[l_i-(l_i)_2]}\left|\e_{\Omega_{\delta}}^{\mathrm{ref}}\Bigg[\prod_{i=1}^{|I|}(\sigma'_{u_i})^{(l_i)_2}\prod_{v \in J}\sigma'_v\Bigg]\right|.\nonumber
\end{align}
Let
\begin{equation}\label{eq:I1}
\begin{aligned}
I_1 & =\{u_i\in I: (m_i-l_i)_2 =1, 1\leq i \leq |I|\}, \\
I'_1 & =\{u_i\in I: (l_i)_2=1, 1\leq i\leq |I|\}.
\end{aligned}
\end{equation}
Then we can write
\begin{align}\label{4.32}
&\left|\mathrm{E}_{\Omega_{\delta}}^{\mathrm{ref}}\left[\prod_{i=1}^{|I|}
(\sigma_{u_i})^{m_i-l_i}\prod_{v \in J}\sigma_v\right]\right|\left|\mathrm{E}_{\Omega_{\delta}}^{\mathrm{ref}}\left[\prod_{i=1}^{|I|}
(\sigma'_{u_i})^{l_i}\prod_{v \in J}\sigma_v'\right]\right|\nonumber\\
\leq\ & c_{|I|}C^{\sum\limits_{i=1}^{|I|}m_i}\left|\e_{\Omega_{\delta}}^{\mathrm{ref}}\Big[\prod_{u \in I_1}\sigma_u\prod_{v \in J}\sigma_v\Big] \right|
\left|\e_{\Omega_{\delta}}^{\mathrm{ref}}\Big[\prod_{u \in I'_1}\sigma'_u\prod_{v \in J}\sigma'_v\Big] \right|\nonumber\\
\leq\ & c_{|I|} C^{\sum\limits_{i=1}^{|I|} m_i}\left(\e_{\Omega_{\delta}}^{\mathrm{ref}}\Big[\prod_{u \in I_1}\sigma_u\prod_{v \in J}\sigma_v\Big]^2+
\e_{\Omega_{\delta}}^{\mathrm{ref}}\Big[\prod_{u \in I'_1}\sigma'_u\prod_{v \in J}\sigma'_v\Big]^2\right),
\end{align}
where
$c_{|I|}=C^{-\sum\limits_{i=1}^{|I|}[(m_i-l_i)_2 +(l_i)_2]} \leq 1$. Inserting this bound into \eqref{4.31}, the absolute value in the left hand side of \eqref{4.30a} is bounded from above by
\begin{align*}
c_{|I|}\lambda^{2|J|}\left(\prod_{i=1}^{|I|}(\lambda C)^{m_i}|a_{m_i,l_i}|\right)
\left(\e_{\Omega_{\delta}}^{\mathrm{ref}}\Big[\prod_{u \in I_1}\sigma_u\prod_{v \in J}\sigma_v\Big]^2+
\e_{\Omega_{\delta}}^{\mathrm{ref}}\Big[\prod_{u \in I'_1}\sigma'_u\prod_{v \in J}\sigma'_v\Big]^2\right).
\end{align*}
Substitute this bound into \eqref{4.30a}. By the symmetry between $(\sigma, m_i-l_i, I_1)$ and $(\sigma', l_i, I_1')$,
the two terms in the sum above give equal contribution, and hence it suffices to show that
\begin{align}\label{2.30}
\sum_{\substack{I, J \subset \Omega_\delta, I\cap J=\emptyset \\ |I|=\iota, |J|=k-\iota\\ I_1\subset I}}
\sum_{\substack{m_1,\cdots,m_{|I|}\geq4\\ 2\leq l_i\leq m_i-2,\\ i=1,\cdots,|I|}}
\lambda^{2|J|} \Bigg(\prod_{i=1}^{|I|}(\lambda C)^{m_i}|a_{m_i,l_i}|\Bigg) \e_{\Omega_{\delta}}^{\mathrm{ref}}\left[\prod_{u \in I_1}\sigma_u\prod_{v \in J}\sigma_v\right]^2\xrightarrow[\delta\downarrow 0]{}  0.
\end{align}
We can further decompose the sum above according to $\iota_1:= |I_1|$. Denote $I_2:=I \backslash I_1$, $\iota_2:=|I_2|$ and $\iota_3=|J|$. Then it suffices to show that for any $\iota_1, \iota_2, \iota_3\geq 0$ with $\iota:=\iota_1+\iota_2 \geq 1$ and $\iota_1+\iota_2+\iota_3=k$,
\begin{align}\label{4.35a}
\sum_{\substack{I_1, I_2, J \subset \Omega_\delta \\ I_1\cap I_2=\emptyset, I \cap J=\emptyset \\ |I_1|=\iota_1, |I_2|=\iota_2, |J|=\iota_3}}
\!\!\!\!\!\!\!\!\!\!\!\! \lambda^{2|J|}\left( \sum_{\substack{m_1,\cdots,m_{|I|}\geq4\\ 2\leq l_i\leq m_i-2,\\ i=1,\cdots,|I|}}
\Bigg(\prod_{i=1}^{|I|}(\lambda C)^{m_i}|a_{m_i,l_i}|\Bigg)\right) \e_{\Omega_{\delta}}^{\mathrm{ref}}\left[\prod_{u \in I_1}\sigma_u\prod_{v \in J}\sigma_v\right]^2\xrightarrow[\delta\downarrow 0]{}  0.
\end{align}
Using \eqref{4.28}, the above sum over $m_i$ and $l_i$ can be bounded by
\begin{equation}\label{2.33}
\begin{aligned}
\sum_{\substack{m_1,\cdots,m_{|I|}\geq4\\ 2\leq l_i\leq m_i-2,\\ i=1,\cdots,|I|}}
\Bigg(\prod_{i=1}^{|I|}(\lambda C)^{m_i}|a_{m_i,l_i}|\Bigg)
& = \prod_{i=1}^{|I|} \Big(\sum_{m_i=4}^\infty (\lambda C)^{m_i} \sum_{l_i=2}^{m_i-2} |a_{m_i,l_i}|\Big) \\
&\leq \prod_{i=1}^{|I|} \Big(\sum_{m_i=4}^\infty (8\lambda C \Ci)^{m_i}\Big) \leq (\hat{C}\lambda)^{4|I|},
\end{aligned}
\end{equation}
where $\hat{C}=16 C \Ci$ and the bound holds for $\lambda=\lambda_\delta=\hat \lambda \delta^{\frac{d}{2}-\gamma}\leq \hat C^{-1}$. The l.h.s.\ of \eqref{4.35a}
can then be bounded by
\begin{align}
& \hat C^{4|I|} (\lambda)^{4|I|+2|J|} \!\!\!\!\!\!\!\!\!\!\!\!\!\!\! \sum_{\substack{I_1, I_2, J \subset \Omega_\delta \\ I_1\cap I_2=\emptyset, I \cap J=\emptyset \\ |I_1|=\iota_1, |I_2|=\iota_2, |J|=\iota_3}}
\!\!\!\!\!\!\!\!\!\!\!\!   \e_{\Omega_{\delta}}^{\mathrm{ref}}\left[\prod_{u \in I_1}\sigma_u\prod_{v \in J}\sigma_v\right]^2 \notag \\
\leq\ &  \hat C^{4|I|} (\hat \lambda \delta^{\frac{d}{2}-\gamma})^{4|I|+2|J|}  (|\Omega| \delta^{-d})^{|I_2|} {k-|I_2| \choose |I_1|}
 \!\!\! \sum_{G\subset \Omega_\delta, |G|=k-\iota_2} \!\!\!\!\!\!\!\!\! \delta^{d |G|}
 \left(\delta^{-\gamma |G|} \e_{\Omega_{\delta}}^{\mathrm{ref}}\Big[\prod_{u \in G}\sigma_u \Big]\right)^2 \cdot \delta^{(2\gamma-d) (k- \iota_2)} \notag \\
\leq \ & \tilde C^k \delta^{d(\iota_1+\iota_2) -2\gamma(\iota_1 + 2\iota_2)} \!\!\!\!\!\!\!\!\!\! \sum_{G\subset \Omega_\delta, |G|=\iota_1+\iota_3} \!\!\!\!\!\! \delta^{d |G|} \left(\delta^{-\gamma |G|} \e_{\Omega_{\delta}}^{\mathrm{ref}}\Big[\prod_{u \in G}\sigma_u \Big]\right)^2 \notag \\
= \ & \tilde C^k \delta^{d(\iota_1+\iota_2) -2\gamma(\iota_1 + 2\iota_2)} \frac{\Vert \psi_\delta\Vert^2_{L^2(\Omega^{\iota_1+\iota_3})}}{(\iota_1+\iota_3)!},
\label{2.34}
\end{align}
where in the second line, $(|\Omega| \delta^{-d})^{|I_2|}$ bounds the number of choices of $I_2\subset \Omega_\delta$, $G:=I_1\cup J$, and the last
power of $\delta$ cancels out the powers of $\delta$ we inserted in the sum over $G\subset \Omega_\delta$. In the third line, $\tilde C^k$ is chosen to bound the factors $\hat C^{4|I|} \hat\lambda^{4|I|+2|J|} |\Omega|^{|I_2|} {k-|I_2| \choose |I_1|}$, using that ${k-|I_2| \choose |I_1|}\leq 2^k$.
Note that $\tilde C$ does not depend on $\delta$, $\iota_1$, $\iota_2$ or $\iota_3$, and $\Vert \psi_\delta\Vert^2_{L^2(\Omega^{\iota_1+\iota_3})}\to \Vert \psi_{0}\Vert_{L^{2}(\Omega^{\iota_1+\iota_3})}^{2}$ as $\delta\downarrow 0$ by assumption {\bf (A1)}. Therefore \eqref{4.35a} holds if
$$
d(\iota_1+\iota_2) -2\gamma(\iota_1 + 2\iota_2) >0 \quad \Longleftrightarrow \quad \gamma < \frac{d}{2} \cdot \frac{\iota_1+\iota_2}{\iota_1+2\iota_2},
$$
which holds for all $\iota_1, \iota_2\geq 0$ with $\iota_1+\iota_2\geq 1$ if and only if $\gamma <d/4$. This is part of the assumption in Theorem
\ref{T:main} and hence the proof of \eqref{4.10} and \eqref{4.8} is complete.
\qed

\subsection{Control of the remainder} \label{S:Remainder}

In this section, we conclude the proof of Theorem \ref{T:main} by proving \eqref{eq:Lim2}, that is, the remainder
\begin{align*}
R_{M, \delta}& = \sum_{k=M+1}^\infty \sum_{\{x_1, \ldots, x_k\}\subset \Omega_{\delta} \atop x_i\neq x_j \mbox{\scriptsize\  for } i\neq j} \!\!\!\!\!\!
\mathrm{E}_{\Omega_{\delta}}^{\mathrm{ref}}\Big[\prod_{i=1}^k(\lambda\omega_{x_i}\sigma_{x_i}+\eta_{x_i})\Big] \\
& =
\sum_{k=M+1}^\infty \sum_{\{x_1, \ldots, x_k\}\subset \Omega_{\delta} \atop x_i\neq x_j \mbox{\scriptsize\  for } i\neq j}
\sum_{I \subset \{x_1, \ldots, x_k\} \atop J=\{x_1, \ldots, x_k\}\backslash I}\mathrm{E}_{\Omega_{\delta}}^{\mathrm{ref}}\Big[\prod_{u \in I}\eta_u\prod_{v \in J}\lambda\sigma_v\omega_v\Big]
\end{align*}
satisfies $\lim_{M\to\infty} \limsup_{\delta\downarrow 0}\E[R_{M, \delta}^2]=0$.

By the same calculations in \eqref{4.10} and \eqref{2.8}, we can apply Lemma \ref{L:ortho} and the inequality $(\sum_{i=1}^{n}a_i)^2\leq n\sum_{i=1}^{n}a_i^2$ to obtain
\begin{align}\label{4.40}
\mathbb{E}[R_{M, \delta}^2]\leq\sum_{k=M+1}^\infty
\sum_{\{x_1, \ldots, x_k\}\subset \Omega_{\delta} \atop x_i\neq x_j \mbox{\scriptsize\  for } i\neq j}
\!\!\!\! 2^k \!\!\! \sum_{I \subset \{x_1, \ldots, x_k\} \atop J=\{x_1, \ldots, x_k\}\backslash I} \!\!\!\!
\mathrm{E}_{\Omega_{\delta}}^{\mathrm{ref},\otimes 2}\left[\prod_{u \in I}\mathbb{E}[\eta_{u}(\sigma, \omega)\eta_{u}(\sigma',\omega)]\prod_{v \in J}\lambda^2\sigma_v\sigma_v'\right],
\end{align}
where $\sigma'$ is the independent copy of $\sigma$. We decompose the r.h.s\ of \eqref{4.40} into two parts corresponding respectively to $I=\emptyset$, which gives the dominant contribution, and $I\neq \emptyset$:
\begin{align}
S_{M, \delta}^{(0)} & :=\sum_{k=M+1}^\infty 2^k \lambda^{2k}
\!\!\!\!\!\! \sum_{\{x_1,\ldots, x_k\} \subset \Omega_\delta \atop x_i\neq x_j \mbox{\scriptsize\  for } i\neq j} \!\!\!\! \mathrm{E}_{\Omega_{\delta}}^{\mathrm{ref}}[\sigma_{x_1} \ldots \sigma_{x_k}]^2, \notag \\
S_{M, \delta}^{(1)} & := \sum_{k=M+1}^\infty 2^k\sum_{\{x_1,\ldots, x_k\} \subset \Omega_\delta \atop x_i\neq x_j \mbox{\scriptsize\  for } i\neq j } \sum_{I \subset \{x_1, \ldots, x_k\} \atop |I|\geq 1} \!\!\!\!
\mathrm{E}_{\Omega_{\delta}}^{\mathrm{ref},\otimes 2}\left[\prod_{u \in I}\mathbb{E}[\eta_{u}(\sigma, \omega)\eta_{u}(\sigma',\omega)]\prod_{v \in J}\lambda^2\sigma_v\sigma_v'\right]. \label{4.41}
\end{align}
It follows directly from assumption {\bf (A2)} that $\lim_{M\to\infty} \limsup_{\delta\downarrow 0}S_{M, \delta}^{(0)} =0$.

To prove \eqref{eq:Lim2}, it only remains to show that
\begin{equation}\label{2.36}
\lim_{M\to\infty} \limsup_{\delta\downarrow 0}S_{M, \delta}^{(1)} =0.
\end{equation}
In fact, we will show that $\limsup_{\delta\downarrow 0}S_{M, \delta}^{(1)} =0$. Our strategy is similar to the proof of \eqref{4.11}, except now we need to control the sum over $k\geq M+1$.

Recall from \eqref{4.21} the expansion
$$
\mathbb{E}[\eta_{u}(\sigma, \omega)\eta_{u}(\sigma',\omega)]
=\sum_{m=4}^{\infty}\sum_{l=2}^{m-2}\lambda^{m}a_{m,l}
(\sigma_u)^{m-l}(\sigma'_u)^{l}.
$$
Substituting this into \eqref{4.41} gives
$$
S_{M, \delta}^{(1)}=
\sum_{k=M+1}^\infty 2^k\sum_{\{x_1,\ldots, x_k\} \subset \Omega_\delta \atop x_i\neq x_j \mbox{\scriptsize\  for } i\neq j } \sum_{I \subset \{x_1, \ldots, x_k\} \atop |I|\geq 1} \!\!\!\!
\mathrm{E}_{\Omega_{\delta}}^{\mathrm{ref},\otimes 2}\left[\prod_{u \in I}\Big(\sum_{m=4}^{\infty}\sum_{l=2}^{m-2}\lambda^{m}a_{m,l}
(\sigma_u)^{m-l}(\sigma'_u)^{l}\Big) \prod_{v \in J}\lambda^2\sigma_v\sigma_v'\right].
$$
Let us first consider the contributions from a fixed $k\geq M+1$. Denote $I=\{u_1,\cdots, u_{|I|}\}\neq\emptyset$ and $J=\{x_1, \ldots, x_k\}\backslash I$. By the same calculations as those leading to \eqref{2.34}, we have the bound
\begin{align}
&\sum_{\{x_1,\ldots, x_k\} \subset \Omega_\delta \atop x_i\neq x_j \mbox{\scriptsize\  for } i\neq j } \sum_{I \subset \{x_1, \ldots, x_k\} \atop |I|\geq 1}  \left|\mathrm{E}_{\Omega_{\delta}}^{\mathrm{ref},\otimes 2}\left[\prod_{u \in I}\left(\sum_{m=4}^{\infty}\sum_{l=2}^{m-2}\lambda^{m}a_{m,l}
(\sigma_u)^{m-l}(\sigma'_u)^{l}\right)\prod_{v \in J}\lambda^2\sigma_v\sigma_v'\right]\right|\nonumber\\
\leq\ & \sum_{\substack{\iota_1, \iota_2, \iota_3\geq 0 \\ \iota_1+\iota_2\geq 1 \\ \iota_1+\iota_2+\iota_3=k }} \sum_{\substack{I_1, I_2, J \subset \Omega_\delta \\ I_1\cap I_2=\emptyset, I \cap J=\emptyset \\ |I_1|=\iota_1, |I_2|=\iota_2, |J|=\iota_3}}
\!\!\!\!\!\!\!\!\!\!\!\!
\lambda^{2|J|} \Bigg(\prod_{i=1}^{|I|}(\lambda C)^{m_i}|a_{m_i,l_i}|\Bigg) \e_{\Omega_{\delta}}^{\mathrm{ref}}\left[\prod_{u \in I_1}\sigma_u\prod_{v \in J}\sigma_v\right]^2 \notag \\
\leq\ & \sum_{\substack{\iota_1, \iota_2, \iota_3\geq 0 \\ \iota_1+\iota_2\geq 1 \\ \iota_1+\iota_2+\iota_3=k }} \sum_{\substack{I_1, I_2, J \subset \Omega_\delta \\ I_1\cap I_2=\emptyset, I \cap J=\emptyset \\ |I_1|=\iota_1, |I_2|=\iota_2, |J|=\iota_3}}
\!\!\!\!\!\!\!\!\!\!\!\!   \lambda^{2|J|} (\hat{C}\lambda)^{4|I|} \e_{\Omega_{\delta}}^{\mathrm{ref}}\left[\prod_{u \in I_1}\sigma_u\prod_{v \in J}\sigma_v\right]^2 \notag \\
\leq\ &
\sum_{\substack{\iota_1, \iota_2, \iota_3\geq 0 \\ \iota_1+\iota_2\geq 1 \\ \iota_1+\iota_2+\iota_3=k }}
\tilde C^{\iota_1+\iota_2+\iota_3} \delta^{d(\iota_1+\iota_2) -2\gamma(\iota_1 + 2\iota_2)} \frac{\Vert \psi_\delta\Vert^2_{L^2(\Omega^{\iota_1+\iota_3})}}{(\iota_1+\iota_3)!}.
\label{4.44a}
\end{align}
Summing this bound over $k\geq M+1$ then gives
\begin{align*}
|S_{M, \delta}^{(1)}| & \leq \sum_{k=M+1}^\infty 2^k \sum_{\substack{\iota_1, \iota_2, \iota_3\geq 0 \\ \iota_1+\iota_2\geq 1 \\ \iota_1+\iota_2+\iota_3=k }}
\tilde C^{\iota_1+\iota_2+\iota_3} \delta^{d(\iota_1+\iota_2) -2\gamma(\iota_1 + 2\iota_2)} \frac{\Vert \psi_\delta\Vert^2_{L^2(\Omega^{\iota_1+\iota_3})}}{(\iota_1+\iota_3)!} \notag \\
& \leq \sum_{\iota_1, \iota_3\geq 0}  \frac{\tilde C^{\iota_1+\iota_3}}{(\iota_1+\iota_3)!} \Vert \psi_\delta\Vert^2_{L^2(\Omega^{\iota_1+\iota_3})} \sum_{\iota_2\geq M+1-\iota_1-\iota_3\atop \iota_2\geq 0,\, \iota_1+\iota_2\geq 1} (\tilde C \delta^{d-4\gamma})^{\iota_2} \delta^{(d-2\gamma)\iota_1} \notag \\
& \leq (\delta^{d-2\gamma}+ 2 \tilde C \delta^{d-4\gamma}) \sum_{m=0}^\infty  \frac{(2\tilde C)^m}{m!} \Vert \psi_\delta\Vert^2_{L^2(\Omega^m)},
\end{align*}
where the two terms in the prefactor on the last line come from $\iota_2=0$ and $\iota_2\geq 1$ respectively, provided $\delta>0$ is sufficiently
small such that $\tilde C \delta^{d-4\gamma}<1/2$. Note that the sum on the last line is uniformly bounded as $\delta\downarrow 0$ by assumptions {\bf (A1)} and {\bf (A2)}. Since we assume $\gamma<d/4$, \eqref{2.36} follows
immediately. This concludes the proof of \eqref{eq:Lim2}.
\qed

\bigskip

\noindent
{\bf Acknowledgements.} We thank Federico Camia for helpful discussions on the assumption {\bf (A3)}, and we also thank the referee for helpful comments on the paper. L.~Li acknowledges the support of the China Scholarship Council program (Project ID: 202206370088). V.~Margarint would like to thank the National University of Singapore for the hospitality. R.~Sun is supported by NUS grant A-8001448-00-00.

\bibliography{MyBib}
\bibliographystyle{abbrv}

\end{document}